\title{The Frobenius morphism on flag varieties, I}
\author{Alexander Samokhin}
\address{Math. Institut, Heinrich-Heine-Universit\"at, D-40204 D\"usseldorf, Germany}
\address{\it and}
\address{Institute for Information Transmission Problems, Moscow,  Russia}
\email{alexander.samokhin@gmail.com}
\jot \setlength{\topmargin}{0.1\topmargin}
\newcommand{\Oo}{\mathcal O}
\newcommand{\Uu}{\mathcal U}
\newcommand{\Ss}{\mathcal S}
\newcommand{\Pp}{\mathbb P}
\newcommand{\Ff}{\mathcal F}
\newcommand{\G}{\mathcal G}
\newcommand{\Ee}{\mathcal E}
\newcommand{\Ll}{\mathcal L}
\newcommand{\D}{\mathcal D}
\newcommand{\C}{\mathcal C}
\newcommand{\T}{\mathcal T}
\newcommand{\Kk}{\mathcal K}
\newcommand{\A}{\mathcal A}
\newcommand{\B}{\mathcal B}
\newcommand*{\RHom}{\mathop{\mathrm RHom}\nolimits}
\newcommand*{\Hom}{\mathop{\mathrm Hom}\nolimits}
\newcommand*{\Dd}{\mathop{\mathrm D\kern0pt}\nolimits}
\newcommand*{\DD}{\mathop{\mathbb D\kern0pt}\nolimits}
\newcommand*{\Ext}{\mathop{\mathrm Ext}\nolimits}
\newtheorem{theorem}{Theorem}[section]
\newtheorem{corollary}{Corollary}[section]
\newtheorem{lemma}{Lemma}[section]
\newtheorem{proposition}{Proposition}[section]
\newtheorem{remark}{Remark}[section]
\newtheorem{definition}{Definition}[section]	
\newtheorem{example}{Example}[section]
\numberwithin{equation}{section}
\long\def\comment#1{}
\begin{document}
	
\maketitle

\begin{abstract}
In this paper, given a semisimple algebraic group $\bf G$ of rank 2, 
we construct a special semiorthogonal decomposition in the derived category of coherent sheaves on the flag variety ${\bf G}/{\bf B}$. These decompositions are defined over the localization ${\mathbb Z}_{\rm S}$, where $\rm S$ is the set of bad primes for $\bf G$, while their block structure is compatible with the Bruhat order on Schubert varieties. The non--standard $t$--structures on $\Dd ^b({\bf G}/{\bf B})$ defined by these decompositions are self--dual with respect to the duality ${\mathcal RHom}_{{\bf G}/{\bf B}}(-,\omega _{{\bf G}/{\bf B}}^{\frac{1}{2}})$ given by the square root of the canonical sheaf of ${\bf G}/{\bf B}$. For the groups of classical type, this allows to construct an explicit decomposition of the higher Frobenii pushforward bundles ${\sf F}^n_{\ast}\Oo _{{\bf G}/{\bf B}}$ into a direct sum of indecomposable bundles. When $p>h$, the Coxeer number of the corresponding group, this set of indecomposable bundles forms a full exceptional collection in $\Dd ^b({\bf G}/{\bf B})$ defined over ${\mathbb Z}_{\rm S}$.
\end{abstract}

\vspace*{0.5cm}

\section{Introduction}

\vspace*{0.5cm}

Fix an algebraically closed field $k$ of characteristic $p>0$. Consider a simply connected semisimple algebraic group $\bf G$ over $k$,
and let ${\bf G}_n$ be the $n$--th Frobenius kernel.
Our main interest is in understanding the vector bundle ${\sf F}^n_{\ast}{\Oo}_{\bf G/B}$ on the flag variety ${\bf G}/{\bf B}$, where ${\sf F}_n: {\bf G/B}\rightarrow {\bf G/B}$ is the ($n$--th power of) Frobenius morphism. The bundle in question is homogeneous, and one would like to understand its ${\bf G}_n{\bf B}$--structure, where ${\bf G}_n{\bf B}$ is the pullback of ${\bf B}$ via the Frobenius morphism on $\bf G$. However, such a description seems to be missing so far in general (cf. \cite[Appendix]{And}: "\dots a good description of the actual ${\bf B}$--structure of this representation seems to be hard to obtain, although such a description certainly would be very useful).

On the other hand, there are sheaves of differential operators with divided powers ${\mathcal D}_{\bf G/B}$ that are related to ${{\sf F}_n}_{\ast}{\Oo}_{\bf G/B}$ via the formula ${\mathcal D}_{\bf G/B} = \cup _{n}{\mathcal End}({{\sf F}_n}_{\ast}{\Oo}_{\bf G/B})$.
Higher cohomology vanishing for sheaves of differential operators 
is essential for localization type theorems, of which the Beilinson--Bernstein localization \cite{BB} is the prototype. It is known that, unlike the characteristic zero case, localization of ${\mathcal D}$--modules  in characteristic $p$
does not hold in general \cite{KaLau}. However, one hopes for a weaker, but still sensible statement. Namely, consider the sheaf 
${\mathcal D}^{(1)}_{\bf G/B}= {\mathcal End}({\sf F}_{\ast}{\Oo}_{\bf G/B})$ (the first term of the $p$--filtration on ${\mathcal D}_{\bf G/B}$ on ${\bf G}/{\bf B}$). The category of 
sheaves of modules over ${\mathcal D}^{(1)}_{\bf G/B}$ is precisely that of ${\mathcal D}_{\bf G/B}$--modules with vanishing $p$--curvature, which is 
equivalent to the category of coherent sheaves via Cartier's equivalence. The question is then whether localization theorem holds for the category ${\mathcal D}^{(1)}_{\bf G/B}$--mod (see \cite{HKR}).

Representation--theoretic approach to ${\bf G}_1{\bf B}$--structure of ${\sf F}_{\ast}{\Oo}_{\bf G/B}$ is worked out in 
\cite{KanYe} and is based on the study of ${\bf G}_1{\bf T}$ (or ${\bf G}_1{\bf B}$)--structure of the induced ${\bf G}_1{\bf B}$--module ${\hat \nabla}(0): = {\rm Ind}_{\bf B}^{{\bf G}_1{\bf B}}\chi _{0}$ (the so-called Humphreys--Verma module). It is generally believed 
(see {\it loc.cit.}) that indecomposable subfactors of ${\sf F}_{\ast}{\Oo}_{\bf G/B}$, which are defined over $\mathbb Z$,
should form a full exceptional collection in the derived category $\Dd ^b({\rm Coh}({\bf G/B}))$. When it holds, it can be seen as a refinement
of the above equivalence for ${\mathcal D}^{(1)}_{\bf G/B}$.
Such a decomposition was obtained for projective spaces and the flag variety ${\bf SL}_3/{\bf B}$ in \cite{HKR}, and for smooth quadrics in \cite{Ach} and \cite{Lan}.

In this paper, we suggest an approach to decomposing the bundle ${{\sf F}_n}_{\ast}{\Oo}_{\bf G/B}$ for $n\in \mathbb N$ that is based on the existence of an appropriate semiorthogonal decomposition of the derived category of coherent sheaves $\Dd ^b({\bf G/B})$. In a nutshell, the idea is as follows.
Given a smooth algebraic variety $X$ of dimension $m$ over $k$, fix an $n\in \mathbb N$ and consider its derived category ${\rm D}^b(X)$. Assume that it admits a semiorthogonal decomposition ${\rm D}^b(X) = \langle \A _{-m},\A _{-n+1},\dots , \A _0\rangle$ with the following property: each admissible category $\A _{-i}$ is generated by an exceptional collection of vector bundles $\{\Ee ^i_l\}_{l=1}^{l=k_i}\in {\rm D}^b(X)$ that are pairwise orthogonal to each other, so that the category $\A _{-i}$ is equivalent to a direct sum of copies ${\rm D}^b({\sf Vect}-k)$. Further, assume that ${\rm H}^j(X,{\sf F}_n^{\ast}\Ee ^i_l)=0$ for $j\neq i$ and for each exceptional bundle $\Ee ^i_l\in \A _{-i}, i = 1,\dots ,m$.

We show in Theorem \ref{th:Frobdecomposclaim} that under these assumptions the bundle ${\sf F}_{\ast}\Oo _X$ decomposes into a direct sum of vector bundles. Indecomposable summands of this decomposition form the so-called right dual semiorthogonal decomposition with respect to 
$\langle \A _{-n},\A _{-n+1},\dots , \A _0\rangle$. Back to flag varieties ${\bf G/B}$, the task of decomposing the bundle ${{\sf F}_n}_{\ast}{\Oo}_{\bf G/B}$ is therefore broken up into two parts: one has first to find a semiorthogonal decomposition of ${\rm D}^b({\bf G/B})$ with the above properties; once such a decomposition has been found, one then calculates the right dual collection, and irreducible summands of ${\sf F}_{\ast}\Oo _{\bf G/B}$ are precisely terms of the latter collection. The multiplicity spaces at each indecomposable summand are identified with cohomology groups ${\rm H}^i(X,{\sf F}^{\ast}\Ee ^i_l)$ that are non--trivial at a unique cohomological degree.

One motivation for the present paper comes from the derived localization theorem of \cite{BMR}. It  implies, in particular, that for a regular weight $\chi$ (that is, for a weight having trivial stabilizer with respect to the dot--action of the (affine) Weyl group)
the bundle ${\sf F}_{\ast}\Ll _{\chi}$ is a generator in the derived category $\Dd ^b({\bf G}/{\bf B})$; in other words, there are sufficiently many  indecomposable summands of 
${\sf F}_{\ast}\Ll _{\chi}$ to generate the whole derived category $\Dd ^b({\bf G}/{\bf B})$. Knowing indecomposable summands of these bundles (e.g., for $p$--restricted weights) may clarify, in particular, cohomology vanishing patterns of line bundles on ${\bf G}/{\bf B}$. 
This paper also continues the study of sheaves of differential operators on flag varieties
in characteristic $p$ that we begun in a series of papers  \cite{Sam} -- \cite{Vanth}.

The paper is organized as follows.  We first recall in Sections \ref{sec:Cohomology_on_G/B} and \ref{sec:SOD_mutations} the necessary definitions and statements on cohomology of homogeneous vector bundles on flag varieties in characteristic $p$, and on semiorthogonal decompositions and mutations in derived categories. Upon stating the necessary prerequisites, Theorem \ref{th:Frobdecomposclaim} follows almost immediately. Next, based on the previous theorem, we work out in Section \ref{sec:P^n_and_quadrics} two classical examples: that of projective spaces, and that of smooth quadrics. In Section \ref{sec:Frobeniusdecompositions} we show how the above method works for classical groups of rank two. 

\vspace{0.2cm}

\subsection*{\bf Acknowledgements}
 I am very much indebted to Alexander Kuznetsov for many useful conversations over the years. 
The first drafts of this paper have been written in 2012--13 while the author benefited from generous supports of the SFB/Transregio 45 at the University of Mainz, and of the SFB 878 at the University of M\"unster. It is a great pleasure to thank Manuel Blickle and Christopher Deninger for their invitations and their interest in this work. The author gratefully acknowledges support from the strategic research fund of the Heinrich-Heine-Universit\"at D\"usseldorf (grant SFF F-2015/946-8).

\vspace{0.2cm}

\subsection*{Notation}
Throughout we fix a perfect field $k$ of characteristic $p>0$.
Given a split semisimple simply connected algebraic group $\bf G$ over $k$, let $\bf T$ denote a maximal torus of $\bf G$, and let ${\bf T}\subset {\bf B}$ be a Borel subgroup containing $\bf T$. The flag variety of Borel subgroups in $\bf G$ is denoted ${\bf G/B}$.
Denote ${\rm X}({\bf T})$ the weight lattice, and let $\rm R$ and  $\rm R ^{\vee}$ denote the root and coroot lattices, respectively. Let $\rm S$ be the set of simple roots relative to the choice of a Borel subgroup than contains $\bf T$. 
The Weyl group ${\mathcal W}={\rm N}({\bf T})/{\bf T}$ acts on $X({\bf T})$ via the dot--action: if $w\in {\mathcal W}$, and 
$\lambda \in {\rm X}({\bf T})$, then $w\cdot \lambda = w(\lambda + \rho) - \rho$. A parabolic subgroup of $\bf G$ is denoted by $\bf P$. For a simple root $\alpha \in {\rm S}$, denote ${\bf P}_{\alpha}\subset \bf G$ the corresponding minimal parabolic subgroup.  Given a weight $\lambda \in  {\rm X}({\bf T})$, denote $\Ll _{\lambda}$ the corresponding line bundle on ${\bf G}/{\bf B}$.  The half sum of the positive roots (the sum of fundamental weights) is denoted by $\rho$. Given a dominant 
weight $\lambda \in  {\rm X}({\bf T})$, we denote $\nabla _{\lambda}$ (resp., $\Delta _{\lambda}$) the induced module ${\rm Ind}_{\bf B}^{\bf G}\lambda$ (resp., the Weyl module), and the simple module with the highest weight $\lambda$ is denoted ${\sf L}_{\lambda}$. Given a variety $X$ and $n\in \mathbb N$, denote ${\sf F}_n$ the $n$--th iteration of the absolute  Frobenius morphism ${\sf F}_n: X\rightarrow X$. For a vector space ${\sf V}$ over $k$ its $n$-th Frobenius twist ${\sf F}_n^{\ast}{\sf V}$ is denoted ${\sf V}^{[n]}$. All the functors are supposed to be derived, i.e., given a morphism $f:X\rightarrow Y$ between two schemes, we write $f_{\ast},f^{\ast}$ for the corresponding derived functors of push--forwards and pull--backs.

\vspace*{0.5cm}

\section{Cohomology of line bundles on ${\bf G}/{\bf B}$}\label{sec:Cohomology_on_G/B}

\vspace*{0.5cm}

\subsection{\bf Flag varieties of Chevalley groups over ${\mathbb Z}$}

Let ${\mathbb G}\rightarrow \mathbb Z$ be a semisimple Chevalley group scheme (a smooth affine group scheme over ${\rm Spec}(\mathbb Z)$ whose geometric fibres are connected semisimple algebraic groups), and ${\mathbb G}/{\mathbb B}\rightarrow \mathbb Z$ be the corresponding Chevalley flag scheme (resp., ${\mathbb P}\subset \mathbb G$ the corresponding parabolic subgroup scheme over ${\mathbb Z}$). 
Then ${\mathbb G/\mathbb P}\rightarrow {\rm Spec}({\mathbb Z})$ is flat and the line bundle $\Ll$ on ${\bf G/P}$ also comes from a line bundle $\mathbb L$ on ${\mathbb G/\mathbb P}$. Let $k$ be a field of arbitrary characteristic, and ${\bf G/B}\rightarrow {\rm Spec}(k)$ be the flag variety obtained by base change along ${\rm Spec}(k)\rightarrow {\rm Spec}(\mathbb Z)$.

\vspace*{0.2cm}

\subsection{\bf Bott's vanishing theorem}\label{subsec:cohlinbunflags}

We recall first the classical Bott's theorem (see \cite{Dem}). Let ${\mathbb G}\rightarrow \mathbb Z$ be a semisimple Chevalley group scheme as above. Assume given a weight $\chi \in X({\mathbb T})$, and let $\Ll _{\chi}$ be the corresponding line bundle on ${\mathbb G}/{\mathbb B}$. The weight $\chi$ is called {\it singular}, if it lies on a wall of some Weyl chamber defined by $\langle -, \alpha ^{\vee}\rangle =0$ for some coroot $\alpha ^{\vee}\in {\rm R}^{\vee}$. Weights, which are not singular, are called {\it regular}. Let $k$ be a field of characteristic zero, and ${\bf G}/{\bf B}\rightarrow {\rm Spec}(k)$ the corresponding flag variety over $k$. The weight $\chi \in X({\bf T})$ defines a line bundle $\Ll _{\chi}$ on 
${\bf G}/{\bf B}$.

\begin{theorem}\cite[Theorem 2]{Dem}\label{th:Bott-Demazure_th}

\vspace*{0.2cm}

\begin{itemize}

\vspace*{0.2cm}

\item[(a)] If $\chi +\rho$ is singular, then ${\rm H}^i({\bf G}/{\bf B},\Ll _{\chi})= 0$ for all $i$.

\vspace*{0.2cm}

\item[(b)] If If $\chi + \rho$  is regular and dominant, then ${\rm H}^i({\bf G}/{\bf B},\Ll _{\chi}) = 0$ for $i>0$.

\vspace*{0.2cm}

\item[(c)]  If $\chi + \rho$  is regular, then ${\rm H}^i({\bf G}/{\bf B},\Ll _{\chi})\neq 0$ for the unique degree $i$, which is equal to $l(w)$. Here $l(w)$ is the length of an element of the Weyl group that takes $\chi$ to the dominant chamber, i.e. $w\cdot \chi \in X_{+}({\bf T})$. The cohomology group ${\rm H}^{l(w)}({\bf G}/{\bf B},\Ll _{\chi})$ is the irreducible $\bf G$--module of highest weight $w\cdot \chi$.

\end{itemize}

\end{theorem}


\subsection{Cohomology of line bundles} 

Some bits of Theorem \ref{th:Bott-Demazure_th} are still true over $\mathbb Z$.

\begin{proposition}\label{prop:acyclic_weights}
If a weight $\chi$ is such that $\langle \chi + \rho, \alpha ^{\vee}\rangle =0$ for some simple root $\alpha$, then the corresponding line bundle is acyclic. 
\end{proposition}

\begin{proof}
Indeed, Lemma from \cite[Section 2]{Dem} holds over fields of arbitrary characteristic (cf. also \cite[Lemma 1.1]{And}).
\end{proof}

Further, Kempf's vanishing theorem \cite[Part II, Chapter 5]{Jan} asserts that the statement of Theorem \ref{th:Bott-Demazure_th} remains true in characteristic $p$ if the weight in question is dominant:

\begin{theorem}[Kempf's vanishing theorem]\label{th:Kempf}
Let $\chi \in X(\bf T)$, i.e. $\langle \chi ,\alpha ^{\vee}\rangle \geq 0$ for all simple coroots $\alpha ^{\vee}$. Then ${\rm H}^i({\bf G}/{\bf B},\Ll _{\chi})=0$ for $i>0$.
\end{theorem}

Besides this, however, very little of Theorem \ref{th:Bott-Demazure_th} holds over $\mathbb Z$ \cite[Part II, Chapter 5]{Jan}. However, it still holds for weights lying in the interior of the bottom alcove in the  dominant chamber \cite[Theorem 2.3 and Corollary 2.4]{And}:

\begin{theorem}\label{th:AndthII}
If $\chi$ is a weight such that for a simple root $\alpha$ one has
$0\leq \langle \chi + \rho,\alpha ^{\vee}\rangle \leq p$ then 
\begin{equation}
{\rm H}^i({\bf G}/{\bf B},\Ll _{\chi}) = {\rm H}^{i+1}({\bf G}/{\bf
  B},\Ll _{s_{\alpha}\cdot \chi}).
\end{equation}
\end{theorem}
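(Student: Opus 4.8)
The plan is to prove Theorem~\ref{th:AndthII} by reducing it to the rank-one situation, just as in the classical Demazure argument underlying Theorem~\ref{th:Andcor}. First I would consider the minimal parabolic ${\bf P}_{\alpha}$ attached to the simple root $\alpha$ and the fibration $\pi_{\alpha}\colon {\bf G}/{\bf B}\to {\bf G}/{\bf P}_{\alpha}$, whose fibres are isomorphic to $\Pp^1$. By the Leray spectral sequence (which degenerates here because the fibres are one-dimensional), the cohomology ${\rm H}^i({\bf G}/{\bf B},\Ll_{\chi})$ is computed from the sheaves $R^j(\pi_{\alpha})_{\ast}\Ll_{\chi}$ on ${\bf G}/{\bf P}_{\alpha}$ for $j=0,1$, and these in turn are determined fibrewise by ${\rm H}^j(\Pp^1,\Oo(\langle\chi,\alpha^{\vee}\rangle))$. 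The key point is that the numerical hypothesis $0\le\langle\chi+\rho,\alpha^{\vee}\rangle\le p$, i.e. $-1\le\langle\chi,\alpha^{\vee}\rangle\le p-1$, is exactly the range where neither $\Ll_{\chi}$ nor its reflection $\Ll_{s_{\alpha}\cdot\chi}$ is acyclic on the fibre, so a two-step filtration / exact sequence argument on ${\bf P}_{\alpha}$-modules is forced.

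Concretely, I would work with the ${\bf P}_{\alpha}$-module structure: set $n=\langle\chi,\alpha^{\vee}\rangle$ with $0\le n+1\le p$, and note $\langle s_{\alpha}\cdot\chi,\alpha^{\vee}\rangle = -n-2$, which lies in $[-p-1,-2]$. On $\Pp^1 = {\bf P}_{\alpha}/{\bf B}$ one has ${\rm H}^0(\Oo(n))\ne 0$ with ${\rm H}^1(\Oo(n))=0$, while ${\rm H}^0(\Oo(-n-2))=0$ with ${\rm H}^1(\Oo(-n-2))\ne 0$; moreover both are irreducible or at least have a clean ${\bf SL}_2$-description in this range when $n+1\le p$. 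The induced bundles $R^0(\pi_{\alpha})_{\ast}\Ll_{\chi}$ and $R^1(\pi_{\alpha})_{\ast}\Ll_{s_{\alpha}\cdot\chi}$ on ${\bf G}/{\bf P}_{\alpha}$ are then associated to ${\bf P}_{\alpha}$-modules that differ by a twist by a character trivial on the derived group, hence are \emph{isomorphic} as ${\bf G}$-equivariant sheaves (this is the rank-one computation: ${\rm H}^0$ of the Serre-twisted ${\bf SL}_2$-module in degree $n$ matches ${\rm H}^1$ of the one in degree $-n-2$). Pushing forward to a point via ${\bf G}/{\bf P}_{\alpha}$ and using the Leray degeneration on both sides then yields ${\rm H}^i({\bf G}/{\bf B},\Ll_{\chi})\cong {\rm H}^{i+1}({\bf G}/{\bf B},\Ll_{s_{\alpha}\cdot\chi})$ as desired. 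Alternatively, and perhaps more cleanly, I would cite the identity ${\rm H}^{\bullet}({\bf G}/{\bf B},\Ll_{\chi}) = {\rm H}^{\bullet}({\bf G}/{\bf P}_{\alpha}, (\pi_{\alpha})_{\ast}\Ll_{\chi}[\text{shift}])$ together with the ${\bf SL}_2$-fact that, for $0\le n+1\le p$, inducing $\Oo(n)$ and $\Oo(-n-2)[-1]$ from ${\bf B}$ to ${\bf P}_{\alpha}$ gives the same object in $\Dd^b({\bf P}_{\alpha}\text{-mod})$ up to the cohomological shift by one.

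The main obstacle is the rank-one ${\bf SL}_2$-computation at the boundary of the range, in particular when $\langle\chi,\alpha^{\vee}\rangle = p-1$ (so $\langle s_{\alpha}\cdot\chi,\alpha^{\vee}\rangle = -p-1$): here $\Oo(-p-1)$ on $\Pp^1$ has ${\rm H}^1$ of dimension $p$, which is exactly the dimension of ${\rm H}^0(\Oo(p-1))$, but the ${\bf SL}_2$-module structures are the Frobenius-untwisted Weyl/dual-Weyl modules rather than simples, so one must check the isomorphism of ${\bf P}_{\alpha}$-modules on the nose, not merely of dimensions; this is precisely where the hypothesis $\langle\chi+\rho,\alpha^{\vee}\rangle\le p$ (rather than $<p$) is used and must not be overshot. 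Once this rank-one identification is secured — it is essentially the content of Serre duality on $\Pp^1$ fibrewise, equivariantly — the higher-rank statement follows formally from the fibration, exactly parallel to how Theorem~\ref{th:Andcor} is extracted from the acyclic-fibre case.
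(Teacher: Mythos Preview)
The paper does not contain a proof of this statement: it is simply quoted as Theorem~2.3 of Andersen's paper \cite{And}, with no argument given. There is therefore nothing in the present paper to compare your proposal against.

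That said, your sketch is essentially the standard argument (and is in fact Andersen's own): factor through the $\Pp^1$-fibration $\pi_{\alpha}\colon {\bf G}/{\bf B}\to {\bf G}/{\bf P}_{\alpha}$, use the degenerate Leray spectral sequence, and identify the ${\bf P}_{\alpha}$-modules $R^0(\pi_{\alpha})_{\ast}\Ll_{\chi}$ and $R^1(\pi_{\alpha})_{\ast}\Ll_{s_{\alpha}\cdot\chi}$ via equivariant Serre duality on the fibre. One small correction to your final paragraph: at the boundary $\langle\chi,\alpha^{\vee}\rangle = p-1$ the ${\bf SL}_2$-module ${\rm H}^0(\Pp^1,\Oo(p-1))$ is the Steinberg module, which is simple and self-dual, so this case is no more delicate than the interior of the range; the non-simple Weyl/dual-Weyl phenomenon you anticipate only begins at $n=p$, which is excluded by the hypothesis.
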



\subsubsection{First cohomology group of a line bundle}

\vspace*{0.2cm}

Let $\alpha$ be a simple root, and denote $s_{\alpha}$ a corresponding reflection in $\mathcal W$. One has $s_{\alpha}\cdot
\chi = s_{\alpha}(\chi)-\alpha$. There is a complete description \cite[Theorem 3.6]{An} 
of (non)--vanishing of the first cohomology group 
of a line bundle $\Ll _{\lambda}$.

\begin{theorem}\label{th:Andth}
${\rm H}^1({\bf G}/{\bf B},\Ll _{\chi})\neq 0$ if and only if
there exist a simple root $\alpha$ such that one of the following conditions is satisfied:

\begin{itemize}

\vspace*{0.2cm}

\item $-p\leq \langle \chi ,\alpha ^{\vee}\rangle \leq -2$ and
  $s_{\alpha}\cdot \chi = s_{\alpha}(\chi)-\alpha$ is dominant.
  
  \vspace*{0.2cm}

\item $\langle \chi ,\alpha ^{\vee}\rangle = -ap^n-1$ for some $a,n\in
  {\bf N}$ with $a<p$ and $s_{\alpha}(\chi)-\alpha$ is dominant.
  
  \vspace*{0.2cm}

\item $-(a+1)p^n\leq  \langle \chi ,\alpha ^{\vee}\rangle \leq
  -ap^n-2$ for some $a,n\in {\bf N}$ with $a<p$ and $\chi +
  ap^n\alpha$ is dominant.

\vspace*{0.2cm}  
  
\end{itemize}
\end{theorem}

One also has \cite[Corollary 3.2]{An}: 

\begin{theorem}\label{th:Andcor}
Let $\chi$ be a weight. If either $\langle \chi ,\alpha ^{\vee}\rangle
\geq -p$ or  $\langle \chi ,\alpha ^{\vee}\rangle = -ap^n-1$ for some
$a, n\in {\bf N}$ and $a<p$ then 
\begin{equation}
{\rm H}^i({\bf G}/{\bf B},\Ll _{\chi}) = {\rm H}^{i-1}({\bf G}/{\bf
  B},\Ll _{s_{\alpha}\cdot \chi}).
\end{equation}
\end{theorem}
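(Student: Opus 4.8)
The plan is to deduce Theorem \ref{th:Andcor} from Theorem \ref{th:Andth} by way of the minimal parabolic $\mathbf{P}_\alpha$ and the $\mathbb{P}^1$-fibration $\pi_\alpha\colon \mathbf{G}/\mathbf{B}\to\mathbf{G}/\mathbf{P}_\alpha$. The key observation is that for a fixed simple root $\alpha$ the cohomology of $\Ll_\chi$ can be computed in two stages via the Leray spectral sequence $\mathrm{H}^i(\mathbf{G}/\mathbf{P}_\alpha, R^j\pi_{\alpha\ast}\Ll_\chi) \Rightarrow \mathrm{H}^{i+j}(\mathbf{G}/\mathbf{B},\Ll_\chi)$. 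Restricted to a fibre $\mathbb{P}^1$, the line bundle $\Ll_\chi$ has degree $n:=\langle\chi,\alpha^\vee\rangle$, so $R^j\pi_{\alpha\ast}\Ll_\chi$ is concentrated in degree $0$ when $n\ge 0$, in degree $1$ when $n\le -2$, and vanishes altogether when $n=-1$. The strategy is to show that under the hypotheses of the theorem the spectral sequence degenerates and the relevant pushforward is itself the pullback of a line bundle from $\mathbf{G}/\mathbf{P}_\alpha$ whose further cohomology reproduces the right-hand side.

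First I would dispose of the easy case $n=-1$ (i.e. $\langle\chi,\alpha^\vee\rangle=-1$): here $s_\alpha\cdot\chi = s_\alpha(\chi)-\alpha = \chi$, and $\Ll_\chi$ restricted to every fibre of $\pi_\alpha$ is $\Oo_{\mathbb{P}^1}(-1)$, which is acyclic, so all $R^j\pi_{\alpha\ast}\Ll_\chi=0$ and both sides of the claimed identity vanish. This also handles the boundary cases $a=0$ in the hypothesis and, more importantly, it is the degenerate instance of the general mechanism. Next I would treat $n\ge 0$. In characteristic zero Demazure's argument uses that $\pi_{\alpha\ast}\Ll_\chi$ is again a line bundle pulled back from the base — this fails in general in characteristic $p$, but it does hold when $\langle\chi,\alpha^\vee\rangle\ge 0$, or more subtly under the "$-ap^n-1$" condition, because of Andersen's analysis of the $\mathbf{SL}_2$-situation and the structure of induced modules for $\mathbf{SL}_2$; in those ranges the relevant $\mathbf{SL}_2$-cohomology behaves "as in characteristic zero" and one gets the clean shift $\mathrm{H}^i(\mathbf{G}/\mathbf{B},\Ll_\chi)=\mathrm{H}^i(\mathbf{G}/\mathbf{B},\pi_\alpha^\ast\pi_{\alpha\ast}\Ll_\chi)$, which equals $\mathrm{H}^{i-1}(\mathbf{G}/\mathbf{B},\Ll_{s_\alpha\cdot\chi})$ after identifying the pushforward. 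Actually, since the statement is attributed to \cite[Corollary 3.2]{An}, the cleanest route is to derive it directly from Theorem \ref{th:Andth}: both conditions in the hypothesis ("$\langle\chi,\alpha^\vee\rangle\ge -p$" and "$=-ap^n-1$") are precisely the ones under which the first and second bullets of Theorem \ref{th:Andth}, applied to the weight $\chi$ with the root $\alpha$, control $\mathrm{H}^1$ of the relevant $\mathbb{P}^1$-bundle summands, and one combines this with the rank-one computation.

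The main obstacle I anticipate is making rigorous the identification of $\pi_{\alpha\ast}\Ll_\chi$ (or its first derived functor) with a pullback from $\mathbf{G}/\mathbf{P}_\alpha$ of the line bundle $\Ll_{s_\alpha\cdot\chi}$ in exactly the two hypothesized ranges, and no wider. Concretely, for $\mathbf{SL}_2$ and a $\mathbf{B}$-character of weight $n$, the induced module $\mathrm{Ind}_{\mathbf{B}}^{\mathbf{SL}_2}(n)$ has a known submodule structure, but $\mathrm{H}^1$ picks up contributions exactly governed by the arithmetic of $n$ modulo powers of $p$; the content of Theorem \ref{th:Andth} is the full bookkeeping of those contributions. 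So the crux is: verify that the hypotheses "$\langle\chi,\alpha^\vee\rangle\ge -p$" or "$\langle\chi,\alpha^\vee\rangle=-ap^n-1$" force the fibrewise $\mathrm{H}^1$ (equivalently the obstruction in the third bullet of Theorem \ref{th:Andth}) to vanish, so that the Leray spectral sequence collapses to a single row and yields the asserted degree shift $\mathrm{H}^i(\mathbf{G}/\mathbf{B},\Ll_\chi)\cong\mathrm{H}^{i-1}(\mathbf{G}/\mathbf{B},\Ll_{s_\alpha\cdot\chi})$. Once that vanishing is in hand, the rest is the standard $\mathbb{P}^1$-bundle cohomology computation together with the observation that $s_\alpha\cdot\chi$ has $\langle s_\alpha\cdot\chi,\alpha^\vee\rangle = -n-2 \le -2$ when $n\ge 0$, which is why the cohomological degree drops by exactly one.
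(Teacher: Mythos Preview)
The paper does not give a proof of this theorem at all; it simply records the statement and cites \cite[Corollary 3.2]{An}. So there is no ``paper's own proof'' to compare against, and your proposal should be judged on its own merits as a sketch of Andersen's argument.

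Your overall strategy---use the $\mathbb{P}^1$-fibration $\pi_\alpha\colon\mathbf{G}/\mathbf{B}\to\mathbf{G}/\mathbf{P}_\alpha$ and the Leray spectral sequence, which collapses because $R^j\pi_{\alpha\ast}\Ll_\chi$ is concentrated in a single degree---is indeed Andersen's. But two points in your write-up are off. First, $\pi_{\alpha\ast}\Ll_\chi$ is \emph{not} a line bundle, neither in characteristic zero nor in characteristic $p$: it is the vector bundle on $\mathbf{G}/\mathbf{P}_\alpha$ associated to the $(n{+}1)$-dimensional $\mathbf{P}_\alpha$-module $\mathrm{H}^0(\mathbf{P}_\alpha/\mathbf{B},k_\chi)$ when $n=\langle\chi,\alpha^\vee\rangle\ge 0$ (and similarly $R^1\pi_{\alpha\ast}\Ll_\chi$ has rank $-n-1$ when $n\le -2$). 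Demazure's characteristic-zero argument does not rely on this being a line bundle; what it uses is an isomorphism of $\mathbf{P}_\alpha$-modules $\mathrm{H}^1(\mathbf{P}_\alpha/\mathbf{B},k_\chi)\cong\mathrm{H}^0(\mathbf{P}_\alpha/\mathbf{B},k_{s_\alpha\cdot\chi})$, and the substance of Andersen's result is that this $\mathbf{SL}_2$-level isomorphism survives in characteristic $p$ precisely under the two stated numerical hypotheses. That is the step you need to supply, and it is purely an $\mathbf{SL}_2$ computation (Andersen's Proposition~3.1), not a consequence of Theorem~\ref{th:Andth}. Second, your suggestion to derive the result from Theorem~\ref{th:Andth} has the logic reversed: in \cite{An}, Corollary~3.2 is an input to Theorem~3.6 (our Theorem~\ref{th:Andth}), not the other way around, and Theorem~\ref{th:Andth} only describes $\mathrm{H}^1$, whereas the present statement concerns all $\mathrm{H}^i$.
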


\vspace*{0.5cm}

\section{Semiorthogonal decompositions, mutations, and exceptional collections}\label{sec:SOD_mutations}

\vspace*{0.5cm}

\subsection{Semiorthogonal decompositions}

Let $k$ be a field. Assume given a $k$--linear triangulated category $\D$, equipped with a shift functor $[1]\colon \D \rightarrow \D$.  For two
objects $A, B \in \D$ let $\Hom ^{\bullet}_{\D}(A,B)$ be
the graded $k$-vector space $\oplus _{i\in \mathbb Z}\Hom _{\D}(A,B[i])$. 
Let $\A \subset \D$ be a full triangulated subcategory,
that is a full subcategory of $\D$ which is closed under shifts.

The original source for most of the definitions and statements in this section is \cite{Bo}. 
We follow the expositions of  \cite[Section 2.1]{Efim} and \cite[Section 2.2]{Kuzrat}.

\begin{definition}\label{def:orthogonalcat}
The right orthogonal $\A ^{\perp}\subset \D$ is defined to be
the full subcategory

\vspace*{0.2cm}

\begin{equation}
\A^{\perp} = \{B \in \D \colon \Hom _{\D}(A,B) = 0 \}
\end{equation}

\vspace*{0.2cm}

\noindent for all $A \in \A$. The left orthogonal $^{\perp}\A$ is defined similarly. 

\end{definition}

\begin{definition}\label{def:admissible}
A full triangulated subcategory $\A$ of $\D$ is called
{\it right admissible} if the inclusion functor $\A\hookrightarrow \D$ has a right adjoint. Similarly, $\A$ is called {\it left
  admissible} if the inclusion functor has a left adjoint. Finally,
$\A$ is {\it admissible} if it is both right and
left admissible.
\end{definition}

If a full triangulated category $\A\subset \D$ is right admissible then every object $X\in \D$ fits into a distinguished triangle
  
\vspace*{0.2cm}
  
\begin{equation}
\dots \longrightarrow  Y\longrightarrow X\longrightarrow Z\longrightarrow Y[1]\rightarrow \dots
\end{equation}

\vspace*{0.2cm}

\noindent with $Y\in \A$ and $Z\in \A^{\perp}$. One then
says that there is a semiorthogonal decomposition of $\D$ into
the subcategories $(\A^{\perp}, \ \A)$. More generally,
assume given a sequence of full triangulated subcategories $\A _1,\dots,\A_n \subset \D$. Denote $\langle \A _1,\dots,\A_n\rangle$ the triangulated subcategory of $\D$ generated by $\A_1,\dots,\A_n$.

\begin{definition}\label{def:semdecomposition}
A sequence $(\A_1,\dots,\A_n)$ of admissible subcategories of
$\D$ is called {\it semiorthogonal} if $\A _i\subset \A_j^{\perp}$ for $1\leq i < j\leq n$,
and $\A_i\subset {^{\perp}\A_j}$ for $1\leq j < i\leq n$.
The sequence $(\A_1,\dots,\A_n)$ is called a {\it semiorthogonal
  decomposition} of $\D$ if $\langle \A_1, \dots, \A_n
\rangle^{\perp} = 0$, that is $\D = \langle \A_1,\dots,\A_n\rangle$.
\end{definition}

\vspace{0.2cm}

The above definition is equivalent to:

\begin{definition}\label{def:semdecomposition-filtration}
A semiorthogonal decomposition of a triangulated category $\D$ is a sequence of full triangulated subcategories $(\A_1,\dots,\A_n)$ in $\D$ such that $\A _i\subset \A_j^{\perp}$ for $1\leq i < j\leq n$ and for every object $X\in \D$ there exists a chain of morphisms in $\D$,

\vspace*{0.2cm}

\[
\xymatrix@C=.5em{
0_{\ } \ar@{=}[r] & X_n \ar[rrrr] &&&& X_{n-1} \ar[rrrr] \ar[dll] &&&& X_{n-2}
\ar[rr] \ar[dll] && \ldots \ar[rr] && X_{1}
\ar[rrrr] &&&& X_0 \ar[dll] \ar@{=}[r] &  X_{\ } \\
&&& A_{n-1} \ar@{->}^{[1]}[ull] &&&& A_{n-2} \ar@{->}^{[1]}[ull] &&&&&&&& A_0\ar@{->}^{[1]}[ull] 
}
\]

\vspace*{0.2cm}

such that a cone $A_k$ of the morphism $X_k\rightarrow X_{k-1}$ belongs to $\A _k$ for $k=1,\dots ,n$.

\vspace*{0.2cm}

\end{definition}


\subsection{Mutations}

Let $\D$ be a triangulated category and assume $\D$ admits a semiorthogonal decomposition 
$\D = \langle \A, \B \rangle$.

\begin{definition}\label{def:left-right_mutation}
The left mutation of $\B$ through $\A$ is defined to be ${\bf L}_{\A}(\B): = \A ^{\perp}$. The 
right mutation of $\A$ through $\B$ is defined to be ${\bf R}_{\B}(\A): = {^{\perp}\B}$.
\end{definition}

One obtains semiorthogonal decompositions $\D = \langle {\bf L}_{\A}(\B), \A\rangle$ and 
$\D = \langle \A , {\bf R}_{\B}(\A)\rangle$.\par 

Let $\A$ be an admissible subcategory of $\D$, and $i : \A\rightarrow \D$ the embedding functor.
It admits a left and a right adjoint functors $\D\rightarrow \A$, the subcategory $\A$ being admissible; denote them $i^{\ast}$ and $i^{!}$, respectively.  Given an object $F \in \D$, define the left mutation ${\bf L}_{\A}(F)$ and the right mutations ${\bf R}_{\A}(F)$  of $F$ through $\A$ by 

\vspace*{0.2cm}

\begin{equation}\label{eq:left-right_mutations_of_an_object}
{\bf L}_{\A}(F): = {\sf Cone}(ii^{!}(F)\rightarrow F), \qquad \qquad {\bf R}_{\A}(F): = {\sf Cone}(F\rightarrow ii^{\ast}(F))[-1].
\end{equation}

\vspace*{0.2cm}

One the proves:

\begin{lemma}\cite[Lemma 2.7]{Kuzrat}\label{lem:mutations_made_explicit}
There are equivalences ${\bf L}_{\A} :\B\simeq \D/\A\simeq {\bf L}_{\A}(\B)$ and ${\bf R}_{\A} :\A\simeq \D/\B\simeq {\bf R}_{\A}(\B)$.
\end{lemma}

\begin{proposition}\cite[Proposition 2.3]{Bo}\label{prop:right-left_mutations_inverse}
Let $\D = \langle \A, \B \rangle$ be as above. Right and left mutations are mutually inverse to each other, i.e. ${\bf R}_{\A}{\bf L}_{\A}\simeq {\rm id}_{\A}$, and ${\bf L}_{\B}{\bf R}_{\B}\simeq {\rm id}_{\B}$.
\end{proposition}

\begin{definition}\label{def:left_and_right_dual_decompositions}
Let $\D = \langle \A _1,\dots,\A_n\rangle$ be a semiorthogonal decomposition. The left dual semiorthogonal decomposition  $\D = \langle \B _n,\dots,\B_1\rangle$ is defined by 

\vspace*{0.2cm}

\begin{equation}
\B _i : = {\bf L}_{\A _1}{\bf L}_{\A _2}\dots {\bf L}_{\A _{i-1}}\A _i = {\bf L}_{\langle \A _1,\dots,\A_{i-1}\rangle}\A _i , \quad 1\leq i \leq n .
\end{equation}

\vspace*{0.2cm}

The right dual semiorthogonal decomposition $\D = \langle \C _n,\dots,\C_1\rangle$ is defined by 

\vspace*{0.2cm}

\begin{equation}
\C _i : = {\bf R}_{\A _n}{\bf R}_{\A _{n-1}}\dots {\bf R}_{\A _{i+1}}\A _i = {\bf R}_{\langle \A _{i+1},\dots,\A_{n}\rangle}\A _i , \quad 1\leq i \leq n .
\end{equation}

\vspace*{0.2cm}

\end{definition}

\begin{lemma}\cite[Lemma 2.10]{Kuzrat}\label{lem:mutations_of_completely_orthogonal_subcategories}
Let $\D = \langle \A _1,\dots,\A_n\rangle$ be a semiorthogonal decomposition such that the components $\A _k$ and $\A _{k+1}$ are completely orthogonal, i.e., $\Hom _{\D}(\A _k,\A _{k+1}) = 0$ and $\Hom _{\D}(\A _{k+1},\A _k) = 0$. Then 

\vspace*{0.2cm}

\begin{equation}
{\bf L}_{\A_k}\A _{k+1} = \A _{k+1} \qquad \qquad and  \qquad \qquad {\bf R}_{\A_{k+1}}\A _k = \A _k,
\end{equation}

\vspace*{0.2cm}

and both the left mutation of $\A _{k+1}$ through ${\A_k}$ and the right mutation of $\A_k$ through $\A _{k+1}$ boil down to a permutation and

\vspace*{0.2cm}

\begin{equation}
\D = \langle \A _1,\dots,\A _{k-1},\A _{k+1},\A _k, \A _{k+2},\dots \A_n\rangle
\end{equation}

\vspace*{0.2cm}

is the resulting semiorthogonal decomposition of $\D$.

\end{lemma}

\subsection{Exceptional collections}

Exceptional collections in $\sf k$--linear triangulated categories are a special case of semiorthogonal decompositions with each component of the decomposition being equivalent to $\Dd ^b({\sf Vect}-{\sf k})$. The above properties of mutations thus specialize to this special case.
Still, there are new features appearing as shown in Subsection \ref{subsec:blockcol}.

\begin{definition}\label{def:exceptcollection}
An object $E \in \D$ of a $\sf k$--linear triangulated category $\D$ is said to be exceptional if there is an isomorphism of graded $\sf k$-algebras 

\vspace{0.2cm}

\begin{equation}
\Hom _{\D}^{\bullet}(E,E) = {\sf k}.
\end{equation}

\vspace{0.2cm}

A collection of exceptional objects $(E_0,\dots,E_n)$ in $\D$ is called 
exceptional if for $1 \leq i < j \leq n$ one has

\vspace{0.2cm}

\begin{equation}
\Hom _{\D}^{\bullet}(E_j,E_i) = 0.
\end{equation}

\vspace{0.2cm}

\end{definition}
Denote $\langle E_0,\dots,E_n \rangle \subset {\D}$ the full
triangulated subcategory generated by the objects $E_0,\dots,E_n$. One 
proves \cite[Theorem 3.2]{Bo} that such a category is admissible. 
The collection $(E_0,\dots,E_n)$ in $\D$ is said to be {\it full} if 
$\langle E_0,\dots,E_n \rangle ^{\perp} = 0$, in other words ${\D}
= \langle E_0,\dots,E_n \rangle$.

 If $\A \subset \D$ is generated by an exceptional object $E$, then by 
 (\ref{eq:left-right_mutations_of_an_object}) the left and right mutations of an object $F\in \D$ through $\A$ are given by the following distinguished triangles:

\vspace*{0.2cm}

\begin{equation}
\RHom _{\D}(E,F)\otimes E\rightarrow F\rightarrow {\bf L}_{\langle E\rangle}(F), \qquad 
{\bf R}_{\langle E\rangle}(F)\rightarrow F\rightarrow \RHom _{\D}(F,E)^{\ast}\otimes E .
\end{equation}

\vspace*{0.2cm}



More generally, if $(E_0,\dots,E_n)$ is an exceptional collection of arbitrary length
in $\D$, then there are  left and right mutations of an object
$E\in {\D}$ through the category $\langle E_0,\dots,E_n
\rangle$, as given in (\ref{eq:left-right_mutations_of_an_object}).
One proves \cite[Proposition 2.1]{Bo} that mutations of an exceptional collection are
exceptional collections.



\subsection{Block collections and block mutations}\label{subsec:blockcol}


\vspace*{0.2cm}

The results of this section are needed for the subsequent Theorem \ref{th:Frobdecomposclaim}.
We follow the exposition of \cite[Section 4]{BrS}.

\begin{definition}\label{def:d-block_collection}
A $d$--block exceptional collection is an exceptional collection $\mathbb E = (E_1,\dots, E_n)$ together with a partition of $\mathbb E$  into $d$ subcollections

\vspace*{0.2cm}

\begin{equation}
\mathbb E = (\mathbb E _1,\dots ,\mathbb E _d),
\end{equation}

\vspace*{0.2cm}

called blocks, such that the objects in each block $\mathbb E _i$ are mutually orthogonal, i.e.
$\Hom ^{\bullet}(E,E') =  0 = \Hom ^{\bullet}(E',E)$ for any $E, E'\in \mathbb E _i$.
\end{definition}

For each integer $1< i\leq d$ we can define an operation $\tau _i$ on $d$--block collections in $\D$ by the rule

\vspace*{0.2cm}

\begin{eqnarray}\label{eq:mutations_block_collections}
& \tau _i (\mathbb E _1,\dots ,\dots \mathbb E _{i-2},\mathbb E _{i-1},\mathbb E _i,\mathbb E _{i+1},\dots \mathbb E _d) = \\
& (\mathbb E _1,\dots ,\dots \mathbb E _{i-2},{\bf L}_{\mathbb E _{i-1}},(\mathbb E _i)[-1],
\mathbb E _{i-1},\mathbb E _{i+1},\dots \mathbb E _d). \nonumber
\end{eqnarray}

\vspace*{0.2cm}

Here, if $\mathbb E _i = (E_{a+1},\dots ,E_b)$ then by definition 

\vspace*{0.2cm}

\begin{equation}
{\bf L}_{\mathbb E _{i-1}}(\mathbb E _i)= ({\bf L}_{\mathbb E _{i-1}}E_{a+1},\dots, {\bf L}_{\mathbb E _{i-1}}E_{b}).
\end{equation}

\vspace*{0.2cm}

\begin{remark}\label{rem:shift_in_mutation}
{\rm Note the shift by $[-1]$ in (\ref{eq:mutations_block_collections}) at ${\bf L}_{\mathbb E _{i-1}},(\mathbb E _i)$; this  will ensure that in the situations below the block mutations $\tau _i$ preserve collections of pure objects.}
\end{remark}

\vspace*{0.2cm}

Recall that a Serre functor (see \cite{BK}) on a $\bf k$--linear triangulated category $\D$ is an autoequivalence ${\mathbb S}_{\D}$ of $\D$ for which there are natural isomorphisms $\Hom _{\D}(E,F) = \Hom _{\D}(F,{\mathbb S}_{\D}(E))^{\ast}$ for $E,F\in \D$. If a Serre functor exists then it is unique up to isomorphism \cite[Proposition 3.4]{BK}.

Given a smooth algebraic variety $X$ over a field $k$, denote $\Dd ^b(X)$ the bounded derived category of coherent sheaves. It is a $k$--linear triangulated category. Let $\omega _X$ be the canonical line bundle on $X$. If $\D = \Dd ^b(X)$ for a smooth projective variety $X$ of dimension $d$, then ${\mathbb S}_{\D} = (-\otimes \omega _X)[d]$.

\begin{theorem}\cite[Theorem 4.5]{BrS}\label{th:mutations_of_block_collections}
Suppose $\mathbb E = (\mathbb E _1,\dots ,\mathbb E _d)$ is a full $d$--block collection and take $1<i\leq d$. Suppose $\D$ is is equipped with a $t$--structure that is preserved by the autoequivalence ${\mathbb S}_{\D}[1-d]$. Then 

\vspace*{0.2cm}

\begin{itemize}

\vspace*{0.2cm}

\item $\mathbb E$ pure implies $\tau _i(\mathbb E)$ pure.

\vspace*{0.2cm}

\item $\mathbb E$ pure implies $\tau _i(\mathbb E)$ strong.

\end{itemize}

\end{theorem}

For our needs, Theorem \ref{th:mutations_of_block_collections} means the following. Let $X$ be a smooth variety of dimension $d-1$, and $\D = \Dd ^b(X)$ equipped with the standard $t$--structure 
$(\Dd ^b(X)^{\leq 0},\Dd ^b(X)^{\geq 0})$. Assume there exists a $d$--block full exceptional collection in $\Dd ^b(X)$ consisting of pure objects, that is, of coherent sheaves in this case.
Then the autoequivalence ${\mathbb S}_{\D}[1-d]$ is just tensoring with $\omega _X$, thus the condition of Theorem \ref{th:mutations_of_block_collections} is immediately satisfied. In this setting, Theorem \ref{th:mutations_of_block_collections} then means that left and right mutations are, too, exceptional collections consisting of coherent sheaves.

\begin{lemma}\cite[Lemma 2.11]{Kuzrat}\label{lem:mutations_canonical_class}
Assume given a semiorthogonal decomposition $\D = \langle \A ,\B\rangle $. Then 

\vspace*{0.2cm}

\begin{equation}
{\bf L}_{\A}(\B)=\B \otimes \omega _X \qquad \qquad and \qquad \qquad  {\bf R}_{\A}(\B)= \A \otimes \omega _X^{-1}.
\end{equation}

\vspace*{0.2cm}

\end{lemma} 


Let $\Ee$ be a vector bundle of rank $r$ on $X$, and consider the associated projective bundle $\pi : \Pp (\Ee)\rightarrow X$. Denote $\Oo _{\pi}(-1)$ the invertible line bundle on $\Pp (\Ee)$ of relative degree $-1$, such that $\pi _{\ast}\Oo _{\pi}(1)=\Ee ^{\ast}$.  One has, \cite{Or}:

\vspace{0.2cm}

\begin{theorem}\label{th:Orvlovth}
The category $\Dd ^b(\Pp (\Ee))$ has a semiorthogonal decomposition: 


\begin{equation}
\Dd ^b(\Pp (\Ee)) = \langle \pi ^{\ast}\Dd ^b(X)\otimes \Oo _{\pi}(-r+1),\dots ,  \pi ^{\ast}\Dd ^b(X)\otimes \Oo _{\pi}(-1),\pi ^{\ast}\Dd ^b(X)\rangle .
\end{equation}

\vspace{0.1cm}

\end{theorem}





\subsection{Dual exceptional collections}


\vspace*{0.2cm}

\begin{definition}\label{def:left-right_dual_exceptional_collections}
Let $X$ be a smooth variety, and assume given an exceptional collection $(E_0,\dots,E_n)$ in $\Dd ^{b}(X)$. The right dual exceptional collection $(F_n,\dots, F_0)$ to $(E_0,\dots,E_n)$ is defined as 

\vspace*{0.2cm}

\begin{equation}
F_i: = {\bf R}_{\langle E_{i+1},\dots,E_{n} \rangle}E_i, \quad {\rm for} \quad 1\leq i\leq n.
\end{equation}

\vspace*{0.2cm}

The left dual exceptional collection $(G_n,\dots, G_0)$ to $(E_0,\dots,E_n)$ is defined as 

\vspace*{0.2cm}

\begin{equation}
G_i: = {\bf L}_{\langle E_{1},\dots,E_{i-1} \rangle}E_i, \quad {\rm for} \quad 1\leq i\leq n.
\end{equation}

\vspace*{0.2cm}

\end{definition}



\begin{proposition}\cite[Proposition 2.15]{Efim}\label{prop:dual_exc_coll_characterization}
Let $(E_0,\dots,E_n)$ be a semiorthogonal decomposition in a triangulated category $\D$. The left dual exceptional collection $\langle F_n, \dots, F_0\rangle$ is uniquely determined by the following property:

\vspace*{0.2cm}

\begin{equation}
\Hom ^l_{\D}(E_i,F _j) = \left\{
   \begin{array}{l}
    {\sf k}, \quad {\rm for} \quad l=0, \  i=j,\\
    0, \quad {\rm otherwise}. \\
   \end{array}
  \right.
\end{equation}

\vspace*{0.2cm}

Similarly, the right dual exceptional collection $\langle G _n,\dots, G _0\rangle $ is uniquely determined by the following property:

\vspace*{0.2cm}

\begin{equation}
\Hom ^l_{\D}(G_i,E_j) = \left\{
   \begin{array}{l}
    {\sf k}, \quad {\rm for} \quad l=0, \  i=j,\\
    0, \quad {\rm otherwise}. \\
   \end{array}
  \right.
\end{equation}

\vspace*{0.2cm}

\end{proposition}

\vspace*{0.5cm}


\section{Resolutions of the diagonal and the decomposition of ${\sf F}_{\ast}\Oo _X$}


\vspace*{0.5cm}
Let $X$ be a smooth variety. Given two (admissible) subcategories $\A$ and $\B$ of $\Dd ^b(X)$, define $\A \boxtimes \B\subset \Dd ^b(X)$ to be the minimal triangulated subcategory of $\Dd ^b(X)$ that contains all the objects $(A\boxtimes B| A\in \A, B\in \B)$. The results of \cite{Kuzbasechange} on base change for semiorthogonal decompositions imply:

\begin{theorem}\label{th:Kuzbasechange}
Let $X$ be as above, and assume given a semiorthogonal decomposition $\langle \A _1,\dots, \A _m\rangle$ of $\Dd ^b(X)$. Let $\langle \C _m,\dots ,\C_1\rangle$ be the right dual semiorthogonal decomposition of $\Dd ^b(X)$ as in Definition \ref{def:left_and_right_dual_decompositions}. 
Then the structure sheaf of the diagonal  $\Delta _{\ast}\Oo _X$ admits (cf. Definition \ref{def:semdecomposition-filtration}) a decomposition $0=D_m\rightarrow D_{m-1}\rightarrow \dots \rightarrow D_1\rightarrow D_0=\Delta _{\ast}\Oo _X$ in $\Dd ^b(X\times X)$, such that ${\rm Cone}(D_i\rightarrow D_{i-1})\in \A _i\boxtimes \C _i^{\vee}\subset \Dd ^b(X\times X)$.
\end{theorem}

\begin{proof}[Sketch of the proof]
The base change theorem for semiorthogonal decompositions \cite[Theorem 5.8]{Kuzbasechange} implies that $\Dd ^b(X\times X)$ has a semiorthogonal decomposition:

\vspace*{0.2cm}

\begin{equation}
\Dd ^b(X\times X) = \langle \A _1\boxtimes \Dd ^b(X),\dots ,\A _m\boxtimes \Dd ^b(X)\rangle ,
\end{equation}

\vspace*{0.2cm}

Consider the decomposition $ D_m\rightarrow D_{m-1}\rightarrow \dots D_1\rightarrow D_0$ of $\Delta _{\ast}\Oo _X$ with respect to the above filtration $\Delta _{\ast}\Oo _X\in \langle \A _1\boxtimes \Dd ^b(X), \dots, \A _m\boxtimes \Dd ^b(X)\rangle$, and let $P_i\in \A _i\boxtimes \Dd ^b(X)$ denotes a cone of the morphism $D_i\rightarrow D _{i-1}$. Proposition \ref{prop:dual_exc_coll_characterization} implies that  $P_i$ belongs, in fact, to $\A _i\boxtimes \C _i^{\vee}\in \A _i\boxtimes \Dd ^b(X)$.
\end{proof}



\begin{corollary}\label{cor:resolution_of_Delta}
Let $X$ be a smooth projective variety, and $(\Ee _0,\dots ,\Ee _m)$ be a full exceptional collection in $\Dd ^b(X)$ with $(\Ff _m, \dots, \Ff _1)$ being its right dual. Then the structure sheaf of the diagonal  $\Delta _{\ast}\Oo _X$ admits  a decomposition $0=D_{m+1}\rightarrow D_m\rightarrow \dots \rightarrow D_1\rightarrow D_0=\Delta _{\ast}\Oo _X$ in $\Dd ^b(X\times X)$, such that a cone of each morphism $D _i\rightarrow D_{i-1}$ is quasiisomorphic to $\Ee _i\boxtimes \Ff  _i^{\vee}$, where $\Ff  _i^{\vee}={\mathcal RHom}(\Ff  _i,\Oo _X)$ is the dual object.

In particular, for any object $\G $ of $\Dd ^b(X)$ there is a spectral sequence 

\vspace*{0.2cm}

\begin{equation}\label{eq:ExcCollSpecSeq}
{\rm E}_1^{p,q}: = {\mathbb H}^{p+q}(X,\G\otimes  \Ff _p^{\vee})\otimes  \Ee _p \Rightarrow \G .
\end{equation}

\vspace*{0.2cm}
\end{corollary}

\begin{proof}
Follows from Theorem \ref{th:Kuzbasechange}. Denoting $p_1,p_2$ the two projections of $X\times X$ onto $X$, for any object $\G \in \Dd ^b(X)$ one has ${p_1}_{\ast}(p_2^{\ast}\G \otimes \Oo _{\Delta _X}) = \G$;  on the other hand, using the decomposition $(D)_{\bullet}$ of $\Oo _{\Delta _X}$ with cones $P_i$ being isomorphic to $\Ee _i\boxtimes \Ff  _i^{\vee}$, one obtains 
a quasiisomorphism $0= D_{m+1}\rightarrow \Ee _m\otimes {\mathbb H}^{\ast}(X,\G \otimes \Ff _m ^{\vee})\rightarrow \dots \rightarrow \Ee _1\otimes {\mathbb H}^{\ast}(X,\G\otimes \Ff _1^{\vee})\rightarrow D_0=\G$. Considering the stupid filtration of the obtained complex, one arrives at (\ref{eq:ExcCollSpecSeq}).
\end{proof}

Assembling together all the previous definitions and statements, we immediately obtain:

\begin{theorem}\label{th:Frobdecomposclaim}
Let $X$ be a smooth variety of dimension $d-1$ over a $\sf k$ of characteristic $p$. Fix an $m\geq 1$ and consider the $m$--th Frobenius morphism ${\sf F}_m$. Assume given a $d$--block (cf. Definition \ref{def:d-block_collection}) full exceptional collection ${\mathbb E} = (\mathbb E _{-d+1},\dots ,\mathbb E _0)$ in $\Dd ^b(X)$ consisting of  coherent sheaves.  Furthermore, assume that for any exceptional object $\Ee \in \mathbb E _i$ and $-d+1\leq i\leq 0$, one has 
${\rm H}^j(X,{\sf F}_m^{\ast}\Ee )=0$ for $j\neq -i$. Denote ${\mathbb G}=(\mathbb G _0,\dots ,\mathbb G _{-d+1})$ the right dual collection. Then 

\vspace*{0.2cm}

\begin{itemize}

\vspace*{0.2cm}

\item[(1)] The right dual collection ${\mathbb G}=(\mathbb G _0,\dots ,\mathbb G _{-d+1})$ is a  $d$--block full exceptional collection.

\vspace*{0.2cm}

\item[(2)] For an exceptional vector bundle $\G \in {\mathbb G}_i$ the corresponding shift is equal to $-i$.

\vspace*{0.2cm}

\item [(3)]There is a decomposition of the bundle ${{\sf F}_n}_{\ast}\Oo _X$ into the direct sum:

\vspace*{0.2cm}

\begin{equation}\label{eq:Decomposition_of_F_*Oo_X}
{{\sf F}_n}_{\ast}\Oo _X = \bigoplus _{i=1}^{i=d} \bigoplus _{\Ee \in \mathbb E _i,  \G \in \mathbb G_{i-d}}{\rm H}^i(X,{\sf F}_n^{\ast}\Ee)\otimes \G^{\vee},
\end{equation}

\vspace*{0.2cm}

and in the inner sum of (\ref{eq:Decomposition_of_F_*Oo_X}) $\G$ is the right dual object for $\Ee$ as in Definition \ref{def:left-right_dual_exceptional_collections}.

\vspace*{0.2cm}

\item [(4)] The terms of ${\mathbb G}$ are, up to a shift, vector bundles on $X$.

\vspace*{0.2cm}

\end{itemize}

\end{theorem}

\begin{proof} 

\begin{itemize}

\vspace{0.2cm}

\item[(1)] Follows from the characterization of right dual collection by Proposition \ref{prop:dual_exc_coll_characterization}.

\vspace{0.2cm}

\item[(2)] Follows from Theorem \ref{th:mutations_of_block_collections}.

\vspace{0.2cm}

\item[(3)] Follows from (2) and from spectral sequence (\ref{eq:ExcCollSpecSeq}) in Corollary \ref{cor:resolution_of_Delta}: namely, the assumption that ${\rm H}^j(X,{\sf F}_m^{\ast}\Ee )=0$ for $j\neq -i$ and for any $\Ee \in \mathbb E _i$ implies that spectral sequence (\ref{eq:ExcCollSpecSeq}) degenerates at the ${\rm E}_1$--term. Indeed, under the theorem's assumptions, Theorem \ref{th:mutations_of_block_collections} ensures that the right dual collection ${\mathbb G}=(\mathbb G _0,\dots ,\mathbb G _{-d+1})$ is pure. Taking into account (\ref{eq:mutations_block_collections}), Definition \ref{def:left_and_right_dual_decompositions}, and Remark \ref{rem:shift_in_mutation}, one sees that if $\G \in \mathbb G _{i-d}$, then by (2) the shifted object $\G [-i]$
is a coherent sheaf. Putting $\G : = {{\sf F}_m}_{\ast}\Oo _X$ in Corollary \ref{cor:resolution_of_Delta} and observing that ${\rm H}^i(X,\Ee \otimes {{\sf F}_m}_{\ast}\Oo _X)={\rm H}^i(X,{\sf F}_m^{\ast}\Ee)$, one obtains a filtration on 
${{\sf F}_m}_{\ast}\Oo _X$ whose associated graded is precisely the right hand side of (\ref{eq:Decomposition_of_F_*Oo_X}). This filtration on ${{\sf F}_m}_{\ast}\Oo _X$ splits, the direct summands of (\ref{eq:Decomposition_of_F_*Oo_X}) being the terms of the right dual exceptional collection $\mathbb G$, hence decomposition (\ref{eq:ExcCollSpecSeq}) follows. 

\vspace{0.2cm}

\item[(4)] Is a consequence of the fact that the terms of the right dual exceptional collection $\mathbb G$ are the direct summands of the coherent sheaf ${{\sf F}_m}_{\ast}\Oo _X$ which is a locally free sheaf, $X$ being smooth.


\end{itemize}

\end{proof}

\vspace*{0.5cm}

\section{Projective spaces and quadrics}\label{sec:P^n_and_quadrics}

\vspace*{0.3cm}

\subsection{Projective spaces} 
To proceed to applications of Theorem \ref{th:Frobdecomposclaim}, we start with the simplest and well--known example. Let $\sf V$ be a vector space of dimension $n+1$ over $\sf k$ of characteristic $p>0$, and $\Pp ^n = \Pp (\sf V)$ be the associated projective space. Beilinson's theorem \cite{Be} asserts that $\Dd ^b(\Pp ^n) = \langle \Oo _{\Pp ^n}(-n),\dots, \Oo _{\Pp ^n}(-1),\Oo _{\Pp ^n}\rangle$. 


\begin{proposition}\label{prop:Beilinson_coll_over_Z}
The collection of line bundles $\langle \Oo _{\Pp ^n}(-n),\dots, \Oo _{\Pp ^n}(-1),\Oo _{\Pp ^n}\rangle $ on $\Pp ^n$ is exceptional.
\end{proposition}

\begin{proof}
This is a consequence of the Serre theorem on cohomology of line bundles on $\Pp ^n_{\mathbb Z}$ (see \cite[III, Theorem 5.1]{Har}).
\end{proof}

\begin{proposition}
The collection from Proposition \ref{prop:Beilinson_coll_over_Z} is full.
\end{proposition}

\begin{proof}
Use the Koszul complexes starting from the Euler sequence to obtain that the category 
$\langle \Oo _{\Pp ^n}(-n),\dots, \Oo _{\Pp ^n}(-1),\Oo _{\Pp ^n}\rangle $
contains all $\Oo (i)$ for $i\in \mathbb Z$ and then conclude by \cite[Example 1.10]{Neem}.
\end{proof}


\begin{lemma}\label{lem:dual_Beilinson_coll_over_Z}
Assuming $p^m>n$, the collection of vector bundles $\langle \Omega _{\Pp ^n}^{n}(n),\dots ,\Omega _{\Pp ^n}^{1}(1),\Oo _{\Pp ^n}\rangle $ satisfies the conditions of Theorem \ref{th:Frobdecomposclaim}.
\end{lemma}

\begin{proof}
From the above one obtains $\Dd ^b(\Pp ^n) = \langle \Oo _{\Pp ^n}(-n),\dots, \Oo _{\Pp ^n}(-1),\Oo _{\Pp ^n}\rangle$; moreover, the right dual to the collection 
$\langle \Oo _{\Pp ^n}(-n),\dots, \Oo _{\Pp ^n}(-1),\Oo _{\Pp ^n}\rangle$ is the collection 
$\langle \Oo _{\Pp ^n},\T _{\Pp ^n}(-1), \dots ,\Oo _{\Pp ^n}(1)\rangle $ (e.g., use the Koszul complexes starting from the Euler sequence to compute it).
By Theorem \ref{th:Frobdecomposclaim} one obtains a resolution of the diagonal 

\vspace{0.3cm}

\begin{equation}
0\rightarrow \Oo _{\Pp ^n}(-1)\boxtimes \Oo _{\Pp ^n}(-n)\rightarrow \dots \rightarrow \Omega _{\Pp ^n}^{1}(1)\boxtimes \Oo _{\Pp ^n}(-1)\rightarrow 
\Oo _{\Pp ^n\times \Pp ^n}\rightarrow \Oo _{\Delta}\rightarrow 0,
\end{equation}

\vspace{0.3cm}

and denote $p_1,p_2$ the two projections of $\Pp ^n\times \Pp ^n$ onto $\Pp ^n$.
Tensoring the resolution along the right $\boxtimes$ factor with $\Oo _{\Pp ^n}(i)$ for $0\leq i\leq n$ and pushing forward the result onto $\Pp ^n$ along $p_1$, one obtains a right Koszul resolution of $\Omega _{\Pp ^n}^{i}(i)$:

\vspace{0.3cm}

\begin{equation}\label{eq:right_Koszul_resolution_for_Omega^i(i)}
0\rightarrow \Omega _{\Pp ^n}^{i}(i)\rightarrow \Omega _{\Pp ^n}^{i-1}(i-1)\otimes {\sf V}^{\ast}\rightarrow  \dots \rightarrow  \Omega _{\Pp ^n}^{1}(1)\otimes {\sf S}^{i-1}{\sf V}^{\ast}\rightarrow {\sf S}^i{\sf V}^{\ast}\otimes \Oo _{\Pp ^n}\rightarrow \Oo _{\Pp ^n}(i)\rightarrow 0.
\end{equation} 

\vspace{0.3cm}

Applying the functor ${\sf F}_m^{\ast}$ to it one can assume, by induction, that ${\rm H}^j(\Pp ^n,{\sf F}_m^{\ast}\Omega _{\Pp ^n}^{k}(k))=0$ for $j>k$ and $0\leq k <i$. Thus,  ${\rm H}^j(\Pp ^n,{\sf F}_m^{\ast}\Omega _{\Pp ^n}^{i}(i))=0$ for $j>i$, the above resolution having the length $i+1$ and the sheaf $\Oo _{\Pp ^n}(i)$ having no higher cohomology by Theorem \ref{th:Kempf}.

On the other hand, tensor the resolution with $\Oo _{\Pp ^n}(-n-1+i)$ along the right $\boxtimes$ -- factor and push forward the result onto $\Pp ^n$ along $p_1$. One obtains now a left Koszul resolution of $\Omega _{\Pp ^n}^{i}(i)$ (rememebering cohomology of line bundles on $\Pp ^n$):

\vspace{0.3cm}

\begin{equation}
0\rightarrow \Oo _{\Pp ^n}(-n-1+i)\rightarrow \Oo _{\Pp ^n}(-1)\otimes {\sf D}^{n-i}({\sf V})\rightarrow	\dots \rightarrow \Omega _{\Pp ^n}^{i+1}(i+1)\otimes {\sf V}\rightarrow \Omega _{\Pp ^n}^{i}(i)\rightarrow 0.
\end{equation}

\vspace{0.3cm}

(${\sf D}$ is the divided power functor). By induction, can assume that ${\rm H}^j(\Pp ^n,{\sf F}_m^{\ast}\Omega _{\Pp ^n}^{k}(k))=0$ for $j<k$ and $i<k <n$. Thus,  ${\rm H}^j(\Pp ^n,{\sf F}_m^{\ast}\Omega _{\Pp ^n}^{i}(i))=0$ for $j<i$, the above resolution having the length $n-i+1$ and the sheaf $\Oo _{\Pp ^n}(-n-1+i)$ having only the top cohomology.

\vspace{0.3cm}

One obtains ${\rm H}^j(\Pp ^n,{\sf F}_m^{\ast}\Omega _{\Pp ^n}^{i}(i))=0$ 
for $j>i$ and ${\rm H}^j(\Pp ^n,{\sf F}_m^{\ast}\Omega _{\Pp ^n}^{i}(i))=0$ for $j<i$. Thus, ${\rm H}^j(\Pp ^n,{\sf F}_m^{\ast}\Omega _{\Pp ^n}^{i}(i))$ can be non-trivial only in the degree $i$; that it is non--trivial (for $p>n$) can be checked computing, for example, the Euler characteristics of the above complexes. On the other hand,  in small characteristics when $p^m<n$ 
the line bundle ${\sf F}_m^{\ast}\Oo _{\Pp ^n}(-1) = \Oo _{\Pp ^n}(-p^m)$ is acyclic; hence the multiplicity space at  $\Oo _{\Pp ^n}(-n)$ is zero which explains the restriction on $p$ in the statement of the lemma.
\end{proof}

\begin{corollary}
One has the decomposition 

\vspace{0.2cm}

\begin{equation}
{{\sf F}_m}_{\ast}\Oo _{\Pp ^n}= \bigoplus _{i=0}^{i=n}\Oo (-i)\otimes {\rm H}^i(\Pp ^n,{\sf F}_m^{\ast}\Omega ^{i}(i)).
\end{equation}

\vspace{0.2cm}

\end{corollary}

\begin{proof}
Follows from Lemma \ref{lem:dual_Beilinson_coll_over_Z} and Theorem \ref{th:Frobdecomposclaim}.
\end{proof}


\subsection{Quadrics} 

A more interesting example is that of smooth quadrics. We assume here that $p$ is odd, thus working over ${\rm Spec}({\mathbb Z}[\frac{1}{2}])$. Consider the semisimple, simply connected group ${\bf Spin}_{n+2}$ and its representation $\nabla _{\omega _1}$ with the highest weight $\omega _1$. Let ${\bf T}\subset {\bf Spin}_{n+2}$ be  a maximal torus. If $n = 2m+1$ or $n = 2m$ then ${\bf Spin}_{n+2}$ has $m + 1$ simple roots $\alpha _1 ,\dots, \alpha _{m+1}$.
A smooth quadric ${\sf Q}_n\subset \Pp (\nabla _{\omega _1})$ is a homogeneous space of ${\bf Spin}_{n+2}$ isomorphic to ${\bf Spin}_{n+2}/{\bf P}_{\alpha _1}$.
We refer to \cite[Section 2]{Lan} for the definition and properties of spinor bundles on smooth quadrics. Depending on the parity of $n$ there are either one spinor bundle denoted by ${\Ss}$ if $n=2k+1$, or two spinor bundles denoted by ${\Ss}_{-},{\Ss}_{+}$ if $n=2k$. The fit into short exact sequences:

\vspace{0.2cm}

\begin{equation}\label{eq:spinor_seq_odd_dim}
0\rightarrow {\Ss}\rightarrow \nabla _{\omega _{k+1}}\otimes \Oo _{{\sf Q}_n}\rightarrow  {\Ss}^{\ast}\rightarrow 0,
\end{equation}

\vspace{0.2cm}

for $n=2k+1$, and 

\vspace{0.2cm}

\begin{equation}\label{eq:spinor_seq_even_dim}
0\rightarrow {\Ss}_{\pm}\rightarrow {\sf U}\otimes \Oo _{{\sf Q}_n}\rightarrow  {\Ss}^{\ast}_{\mp}\rightarrow 0,
\end{equation}

\vspace{0.2cm}

for $n=2k$, where ${\sf U}: = \nabla _{\omega _{k}}$ for the "upper" sequence and ${\sf U}: = \nabla _{\omega _{k+1}}$ for the "lower" sequence (the two spinor representations of ${\bf Spin}_{n+2}$). The positive generator $\Oo _{{\sf Q}_n}(1)$ of ${\rm Pic}({\sf Q}_n)=\mathbb Z$ is isomorphic to the line bundle $\Ll _{\omega _1}$.
Moreover, there are isomorphisms ${\Ss}\otimes \Oo _{{\sf Q}_n}(1)= {\Ss}^{\ast}$ for $n=2k+1$, and ${\Ss}_{\pm}\otimes \Oo _{{\sf Q}_n}(1)={\Ss}^{\ast}_{\mp}$ for $n=2k$.\\ 

For simplicity, we assume below $n=2k+1$, the case of $n=2k$ being similar. The character of the spinor represenation $\nabla _{\omega _k}$ is given by 
by the Weyl character formula (its validity in characteristic $p$ is a consequence of Theorem \ref{th:Kempf}):

\vspace{0.2cm}

\begin{equation}
\chi (\nabla _{\omega _k}) = \frac{(1+e^{\omega _1})(e^{\omega _1}+e^{\omega _2})\dots (e^{\omega _{k-2}}+e^{\omega _{k-1}})(e^{\omega _{k-1}}+e^{2\omega _k})}{e^{\omega _1+\dots +\omega _k}}.
\end{equation}

\vspace{0.2cm}

Using sequence (\ref{eq:spinor_seq_odd_dim}) and the isomorphism ${\Ss}\otimes \Oo _{{\sf Q}_n}(1)= {\Ss}^{\ast}$, one can read off the weights of the spinor bundle $\Ss$. More precisely, denote $\pi: {\bf Spin}_{n+2}/{\bf B}\rightarrow {\bf Spin}_{n+2}/{\bf P}_{\alpha _1}={\sf Q}_n$. 

\begin{example}
{\rm 
Let $n=7$ (i.e., $k=3$). The weights of $\nabla _{\omega _3}$ are equal to 

\vspace{0.2cm}

\begin{equation}
-\omega _3, \ \omega _3-\omega _2, \ -\omega _1+\omega _2-\omega _3, \ \omega _3-\omega _1, \ \omega _1-\omega _3,\ \omega _1-\omega _2+\omega _3, \ \omega _2-\omega _3, \ \omega _3.
\end{equation}

\vspace{0.2cm}

Thus, the bundle $\pi ^{\ast}\Ss$ on the flag variety ${\bf Spin}_{n+2}/{\bf B}$ is filtered by the set of line bundles $\Ll _{-\omega _3}, \ \Ll _{-\omega _1+\omega _3}, \ \Ll_{-\omega _1+\omega _2-\omega _3}, \ \Ll _{-\omega _2+\omega _3}$.
}
\end{example}

\begin{proposition}\label{prop:F^*of_spinor_bundle}
One has ${\rm H}^i({\sf Q}_n,{\sf F}_m^{\ast}\Ss)=0$ for $i\neq \lceil\frac{n}{2}\rceil +1$.
\end{proposition}

\begin{proof}
This follows from \cite[Section 4]{Lan}. For the first--order Frobenius morphism (i.e. for $m=1$) it can also be shown using the defining short exact sequences for the spinor bundles (\ref{eq:spinor_seq_odd_dim}) and (\ref{eq:spinor_seq_even_dim}), and Theorems 	\ref{th:Kempf} and \ref{th:Andcor}. Namely, 
using the knowledge of the weights in the filtration on $\Ss$, one shows, first, that ${\rm H}^i({\sf Q}_n,{\sf F}^{\ast}\Ss)=0$ for $i<\lceil\frac{n}{2}\rceil +1$. Likewise, working out the weights in the dual filtration on $\Ss ^{\ast}$, one obtains ${\rm H}^i({\sf Q}_n,{\sf F}^{\ast}\Ss ^{\ast})=0$ for $i\geq \lceil\frac{n}{2}\rceil +1$.

\end{proof}

\begin{theorem}\label{th:Kap_exc_coll_quadrics}
The collection of vector bundles 

\vspace{0.2cm}

\begin{equation}\label{eq:Kap_exc_coll_quadrics}
{\Ss}(-n), \Oo _{{\sf Q}_n}(-n+1),\dots , \Oo _{{\sf Q}_n}(-1),\Oo _{{\sf Q}_n} 
\end{equation}

\vspace{0.2cm}

on ${\sf Q}_n$ is exceptional and full.
\end{theorem}

\begin{proof}
If the base field of characteristic zero, then this is the content of \cite[Theorem 4.10]{Kap}. In fact, that the collection (\ref{eq:Kap_exc_coll_quadrics}) is exceptional in characteristic $p>n$ can be verified with the help of Proposition \ref{prop:acyclic_weights} and Theorem \ref{th:AndthII}; the latter theorem provides the validity of cohomology calculactions in \cite{Kap} for these primes, the weights of vector bundles in question lying in the interior of the bottom alcove in the dominant chamber in this case. That the collection (\ref
{eq:Kap_exc_coll_quadrics}) is full follows from short exact sequences (\ref{eq:spinor_seq_odd_dim}) and (\ref{eq:spinor_seq_even_dim}), and Lemma \ref{lem:mutations_canonical_class}; namely, from the above sequences one sees that the right mutation of $\Ss (-n)$ through the collection $\langle \Oo _{{\sf Q}_n}(-n+1),\dots , \Oo _{{\sf Q}_n}(-1),\Oo _{{\sf Q}_n}\rangle$ is isomorphic to $\Ss ^{\ast} = \Ss (1) = \Ss (-n)\otimes \omega ^{-1}_{{\sf Q}_n}$. By \cite[Theorem 4.1]{Bo}, the collection (\ref{eq:Kap_exc_coll_quadrics}) is full.
\end{proof}

Thus, sequences (\ref{eq:spinor_seq_odd_dim}) and (\ref{eq:spinor_seq_even_dim}) easily allow to mutate the twisted spinor bundle ${\Ss}(-n)$ to the right within collection (\ref{eq:Kap_exc_coll_quadrics}). 
To obtain a full exceptional collection on ${\sf Q}_n$ that would make Theorem \ref{th:Frobdecomposclaim} applicable, we are going to mutate the bundle ${\Ss}(-n)$ to the right so that it would be situated in the "middle" of sequence (\ref{eq:Kap_exc_coll_quadrics}). In virtue of sequences (\ref{eq:spinor_seq_odd_dim}) and (\ref{eq:spinor_seq_even_dim}), one obtains that the 
right mutation of ${\Ss}(-n)$ is isomorphic to ${\Ss}(-\lceil\frac{n}{2}\rceil)$ (resp., to ${\Ss}_{\pm}(-\lceil\frac{n}{2}\rceil)$). Hence:

\begin{corollary}

The collection of vector bundles 

\vspace{0.2cm}

\begin{equation}\label{eq:mutated_Kap_exc_coll_quadrics}
\Oo _{{\sf Q}_n}(-n+1),\dots , \Oo _{{\sf Q}_n}(-k), \dots ,{\Ss}(-\lceil\frac{n}{2}\rceil), \dots, \Oo _{{\sf Q}_n}(-1),\Oo _{{\sf Q}_n} 
\end{equation}

\vspace{0.2cm}

on ${\sf Q}_n$ is exceptional and full.
\end{corollary}

Let

\vspace{0.2cm}

\begin{equation}\label{eq:dual_Kap_exc_coll_quadrics}
\Oo _{{\sf Q}_n}(-1) \dots \dots , \Phi _k, \dots \dots ,  {\Ss}, \dots \dots ,\Phi _1,\Oo _{{\sf Q}_n} 
\end{equation}

\vspace{0.2cm}

be the right dual collection to (\ref{eq:mutated_Kap_exc_coll_quadrics}). Here some terms are made explicit: the right dual object to $\Oo _{{\sf Q}_n}(-n+1)$ is isomorphic to $(\Oo _{{\sf Q}_n}(-n+1)\otimes \omega ^{-1}_{{\sf Q}_n})^{\ast}=\Oo _{{\sf Q}_n}(-1)$ by Lemma \ref{lem:mutations_canonical_class}; the right dual bundle to ${\Ss}(-\lceil\frac{n}{2}\rceil)$ is isomorphic to $\Ss$
in virtue of sequence (\ref{eq:spinor_seq_odd_dim}). Typically, the right dual to $\Oo _{{\sf Q}_n}(-k)$ is denoted $\Phi _k$.\footnote{Originally, the bundles $\Phi _k$ are denoted by $\Psi _k$ in \cite{Kap}. We reserve the letter $\Psi$ for the subsequent Definition \ref{def:bundles_Psi} that will be used throughout in the sequel, hence the change of notation.}

\begin{lemma}\label{lem:dual_Kapranov_collection_overZ}
Assuming $p^m>n$, the collection (\ref{eq:dual_Kap_exc_coll_quadrics}) satisfies the conditions of Theorem 
\ref{th:Frobdecomposclaim}.
\end{lemma}

\begin{proof}
By Theorem \ref{th:Frobdecomposclaim} one obtains a resolution of the diagonal 

\vspace{0.2cm}

\begin{eqnarray}
& 0\rightarrow  \Oo _{{\sf Q}_n}(-1)\boxtimes\Oo _{{\sf Q}_n}(-n+1)\rightarrow \dots \rightarrow
\Phi _k\boxtimes \Oo _{{\sf Q}_n}(-k)\rightarrow \dots \rightarrow \\
& \dots \rightarrow {\Ss} \boxtimes {\Ss}(-\lceil\frac{n}{2}\rceil)\rightarrow \dots \rightarrow 
\Phi _1\boxtimes \Oo _{{\sf Q}_n}(-1)\rightarrow \Oo _{{\sf Q}_n\times  {\sf Q}_n}\rightarrow \Oo _{\Delta}\rightarrow 0. \nonumber
\end{eqnarray}

\vspace{0.2cm}

Denote $p_1,p_2$ the two projections of ${\sf Q}_n\times  {\sf Q}_n$ onto ${\sf Q}_n$. The argument is similar to that in Lemma \ref{lem:dual_Beilinson_coll_over_Z}.
Tensoring the resolution along the right $\boxtimes$ factor with $\Oo _{{\sf Q}_n}(i)$ for $0\leq i \leq \lceil\frac{n}{2}\rceil$ and pushing forward the result onto ${\sf Q}_n$ along $p_1$, one obtains a right Koszul resolution of $ \Phi _i$:

\vspace{0.2cm}

\begin{equation}
0\rightarrow \Phi _i\rightarrow \Phi _{i-1}\otimes \nabla _{\omega _1}\rightarrow  \dots \rightarrow  \Phi _1\otimes \nabla _{(i-1)\omega _1}\rightarrow \nabla _{i\omega _1}\otimes \Oo _{{\sf Q}_n}\rightarrow \Oo _{{\sf Q}_n}(i)\rightarrow 0.
\end{equation} 

\vspace{0.2cm}

Applying the functor ${\sf F}_m^{\ast}$ to it, one can assume, by induction, that ${\rm H}^j({\sf Q}_n,{\sf F}_m^{\ast}\Phi _k)=0$ for $j>k$ and $0\leq k <i$. Thus,  ${\rm H}^j({\sf Q}_n,{\sf F}_m^{\ast}\Phi _i)=0$ for $j>i$, the above resolution having the length $i+1$ and the sheaf $\Oo _{{\sf Q}_n}(i)$ having no higher cohomology by Theorem \ref{th:Kempf}.

Tensoring  the resolution along the right $\boxtimes$ factor with $\Oo _{{\sf Q}_n}(i)$ for $i>\lceil\frac{n}{2}\rceil$ gives: 

\vspace{0.2cm}

\begin{equation}
0\rightarrow \Phi _i\rightarrow \dots \rightarrow \Phi _{{\lceil\frac{n}{2}\rceil}+1}\otimes \nabla _{\omega _1}\rightarrow \Ss \otimes {\sf W}_i\rightarrow \Phi _{\lceil\frac{n}{2}\rceil}\otimes \nabla _{2\omega _1}\rightarrow  \dots \rightarrow  \Oo _{{\sf Q}_n}(\lceil\frac{n}{2}\rceil+1)\rightarrow 0,
\end{equation} 

\vspace{0.2cm}

where ${\sf W}_i = {\rm H}^0({\sf Q}_n,\Ss (i-\lceil\frac{n}{2}\rceil))$ (this space can be computed using the filtration by line bundles on $\pi ^{\ast}\Ss$). Using Proposition \ref{prop:F^*of_spinor_bundle} and the above inductive argument, one sees that ${\rm H}^j({\sf Q}_n,{\sf F}_m^{\ast}\Phi _i)=0$ for $j>i>\lceil\frac{n}{2}\rceil$ as well.

On the other hand, tensor the resolution with $\Oo _{{\sf Q}_n}(-n-1+i)$ along the right $\boxtimes$ -- factor and push forward the result onto ${\sf Q}_n$ along $p_1$. One obtains now a left Koszul resolution of $\Phi _i$ (rememebering cohomology of line bundles on ${\sf Q}_n$):

\vspace{0.2cm}

\begin{equation}
0\rightarrow \Oo _{{\sf Q}_n}(-n-1+i)\rightarrow \Oo _{{\sf Q}_n}(-1)\otimes \Delta _{(n-i)\omega _1}\rightarrow	\dots \rightarrow \Phi _{i+1}\otimes \Delta _{\omega _1}\rightarrow \Phi _i\rightarrow 0.
\end{equation}

\vspace{0.2cm}

By induction, we can assume that ${\rm H}^j({\sf Q}_n,{\sf F}_m^{\ast}\Phi _k)=0$ for $j<k$ and $i<k <n$. Thus,  ${\rm H}^j({\sf Q}_n,{\sf F}^{\ast}\Phi _i)=0$ for $j<i$, the above resolution having the length $n-i+1$ and the sheaf $\Oo _{{\sf Q}_n}(-n-1+i)$ having only the top cohomology. One obtains ${\rm H}^j({\sf Q}_n,{\sf F}_m^{\ast}\Phi _i)=0$ for $j>i$ and ${\rm H}^j({\sf Q}_n,{\sf F}_m^{\ast}\Phi _i)=0$ for $j<i$. Thus, ${\rm H}^j({\sf Q}_n,{\sf F}_m^{\ast}\Phi _k)$ can be non-trivial only in the degree $k$; that it is non--trivial (for $p^m>n$) can be checked computing, for example, the Euler characteristics of the above complexes.
\end{proof}

\begin{corollary}
One obtains the following decomposition of ${{\sf F}_m}_{\ast}\Oo  _{{\sf Q}_n}$:
\end{corollary}

\vspace{0.2cm}

\begin{equation}
{{\sf F}_m}_{\ast}\Oo  _{{\sf Q}_n}= \bigoplus _{i=0}^{n-1} \Oo _{{\sf Q}_n}(-i)\otimes 
{\rm H}^i({\sf Q}_n,{\sf F}_m^{\ast}\Phi _k)\oplus {\sf S}(-\lceil\frac{n}{2}\rceil)\otimes 
{\rm H}^i({\sf Q}_n,{\sf F}_m^{\ast}{\sf S}).
\end{equation}

\vspace{0.2cm}

\begin{proof}
Follows from Lemma \ref{lem:dual_Kapranov_collection_overZ} and Theorem \ref{th:Frobdecomposclaim}.
\end{proof}

\vspace{0.5cm}

\section{Flag varieties of semi-simple rank two groups}\label{sec:Frobeniusdecompositions}

\vspace{0.5cm}

In this section we find semiorthogonal decompositions of the derived categories of flag varieties 
of semi-simple rank two groups, i.e. of the types ${\bf A}_2$ and ${\bf B}_2$. 
We explicitly compute right dual collections for all the above groups that allow to obtain the decomposition of ${{\sf F}_n}_{\ast}\Oo _{{\bf G}/{\bf B}}$ for $\bf G$ as above.


\subsection{Partial decompositions}


We start with a simple observation about semiorthogonal decompositions that partly satisfy the conditions of Theorem \ref{th:Frobdecomposclaim} and hold for arbitrary groups.

\begin{definition}\label{def:bundles_Psi}
Given a semisimple algebraic group ${\bf G}$ and a fundamental weight $\omega$ of $\bf G$, for $1\leq i\leq l$, where $l={\rm dim} (\nabla _{\omega _k})$ the bundle $\Psi ^{\omega _k}_i$ is set to be the pull--back of $\Omega _{\Pp (\nabla _{\omega _k})}^i(i)$ along the morphism ${\bf G}/{\bf B}\rightarrow \Pp (\nabla _{\omega _k})$ defined by a semi--ample line bundle $\Ll _{\omega _k}$. The latter morphism factors through the projection $\pi _k: {\bf G}/{\bf B}\rightarrow {\bf G}/{\bf P}_{\hat \alpha _k}$ and the embedding ${\bf G}/{\bf P}_{\hat \alpha}\hookrightarrow \Pp (\nabla _{\omega _k})$, where ${\bf P}_{\hat \alpha _k}$ is the maximal parabolic subgroup of $\bf G$ obtained by adjoining all but one the simple roots to $\bf B$; the deleted simple root is the one with $\langle \omega _k , \alpha _k^{\vee}\rangle =1$.
\end{definition}

By definition, the bundles $\Psi _1^{\omega _k}$ fit into short exact sequences:

\vspace*{0.2cm}

\begin{equation}\label{eq:defining_sequence_for_Psi}
0\rightarrow \Psi _1^{\omega _k}\rightarrow \nabla _{\omega _k}\otimes \Oo _{{\bf G}/{\bf B}}\rightarrow 
\Ll _{\omega _k}\rightarrow 0,
\end{equation}

\vspace*{0.2cm}

More generally,  for $1\leq i\leq l$ one has the Koszul complexes (cf. (\ref{eq:right_Koszul_resolution_for_Omega^i(i)})):

\vspace*{0.2cm}

\begin{equation}\label{eq:defining_sequence_for_Psi_i}
0\rightarrow \Psi _i^{\omega _k}\rightarrow \Psi _{i-1}^{\omega _k}\otimes \nabla _{\omega _k}\rightarrow  \dots \rightarrow   \Psi _1^{\omega _k}\otimes {\sf S}^{i-1}\nabla _{\omega _k}\rightarrow {\sf S}^i\nabla _{\omega _k}\otimes \Oo _{{\bf G}/{\bf B}}\rightarrow \Ll _{i\omega _k}\rightarrow 0.
\end{equation}

\vspace*{0.2cm}

\begin{proposition}\label{prop:simple_partial_decomp}
Let $\bf G$ be a group of rank $n$, and $\omega _1,\dots ,\omega _n$ the fundamental weights.
Consider a sequence $\A = \langle \A _i\rangle _{i=-4}^{i=0}$ of full triangulated subcategories of $\Dd ^b({\bf G}/{\bf B})$:

\vspace*{0.2cm}

\begin{eqnarray}\label{eq:partial_decomposition}
& \A _{-3}=\langle \Ll _{-\rho}\rangle, \quad \A _{-2}, \quad \A _{-1}=\langle \Psi _1^{\omega _1},\dots, \Psi _1^{\omega _n} \rangle, \quad \A _0=\langle \Oo _{{\bf SL}_3/{\bf B}}\rangle , 
\end{eqnarray}

\vspace*{0.2cm}

where $\A _{-2}: = {^{\perp}\langle \A _{-3}\rangle} \cap {\langle \A _{-1}, \A _{0}\rangle}^{\perp}$. Then the components $\A _{-3}, \A _{-1},\A _{0}$ satisfy the conditions of Theorem \ref{th:Frobdecomposclaim}.
\end{proposition}

\begin{proof}
Semiorthogonality of sequence (\ref{eq:partial_decomposition}) is immediately verified using short exact sequences defining the bundles $\Psi _1^{\omega _k}$

\vspace*{0.2cm}

\begin{equation}\label{eq:defining_sequence_for_Psi_1}
0\rightarrow \Psi _1^{\omega _k}\rightarrow \nabla _{\omega _k}\otimes \Oo _{{\bf G}/{\bf B}}\rightarrow 
\Ll _{\omega _k}\rightarrow 0,
\end{equation}

\vspace*{0.2cm}

and Theorem \ref{th:Bott-Demazure_th}. Theorem \ref{th:Kempf} gives ${\rm H}^i({\bf G}/{\bf B},\Oo _{{\bf G}/{\bf B}})=0$ for $i>0$, which proves the statement for the category $\A _0$. Putting $d={\rm dim}({\bf G}/{\bf B})$, Serre's duality on ${\bf G}/{\bf B}$ implies:

\vspace*{0.2cm}

\begin{equation}\label{eq:Serre_duality_for_-rho}
{\rm H}^i({\bf G}/{\bf B},\Ll _{-p^n\rho}) = {\rm H}^{d-i}({\bf G}/{\bf B},\Ll _{(p^n-2)\rho})^{\ast}.
\end{equation}

\vspace*{0.2cm}

If $p=2$ and $n=1$, then the right hand side of (\ref{eq:Serre_duality_for_-rho}) is isomorphic to ${\rm H}^{d-i}({\bf G}/{\bf B},\Oo _{{\bf G}/{\bf B}})^{\ast}$. If either $p>2$ or $n>1$, then line bundle $\Ll _{(p^n-2)\rho}$ is ample on ${\bf G}/{\bf B}$. In both cases the group 
${\rm H}^{d-i}({\bf G}/{\bf B},\Ll _{(p^n-2)\rho})$ vanishes for $d-i>0$, hence ${\rm H}^i({\bf G}/{\bf B},\Ll _{-p^n\rho})$ is non--zero only for $i=d$. 

Applying ${\sf F}_n^{\ast}$ to (\ref{eq:defining_sequence_for_Psi}) and passing to the cohomology, one obtains 

\vspace*{0.2cm}

\begin{eqnarray}
& 0\rightarrow {\rm H}^0({\bf G}/{\bf B},{\sf F}_n^{\ast}\Psi _1^{\omega _k})\rightarrow 
\nabla _{\omega _k}^{[n]}\otimes {\rm H}^0({\bf G}/{\bf B},\Oo _{{\bf G}/{\bf B}})\rightarrow {\rm H}^0({\bf G}/{\bf B},\Ll _{p^n\omega _k})  \\
& {\rm H}^1({\bf G}/{\bf B},{\sf F}_n^{\ast}\Psi _1^{\omega _k})\rightarrow 
\nabla _{\omega _k}^{[n]}\otimes {\rm H}^1({\bf G}/{\bf B},\Oo _{{\bf G}/{\bf B}})\rightarrow 
{\rm H}^1({\bf G}/{\bf B},\Ll _{p^n\omega _k}) \rightarrow  \dots
\dots  \nonumber
\end{eqnarray}

\vspace*{0.2cm}

The higher cohomology groups ${\rm H}^i({\bf G}/{\bf B},\Oo _{{\bf G}/{\bf B}})$ and 
${\rm H}^1({\bf G}/{\bf B},\Ll _{p^n\omega _k})$ vanish for $i>0$ by Theorem \ref{th:Kempf}, and the map $\nabla _{\omega _k}^{[n]}\otimes {\rm H}^0({\bf G}/{\bf B},\Oo _{{\bf G}/{\bf B}})=\nabla _{\omega _k}^{[n]}\rightarrow {\rm H}^0({\bf G}/{\bf B},\Ll _{p^n\omega _k})=\nabla _{p^n\omega _k}$ is the embedding of the Frobenius twist of an irreducible\footnote{
Away from the very small primes for which the induced module $\nabla _{\omega _k}$ is reducible.}
 representation $\nabla _{\omega _k}$ into $\nabla _{p^n\omega _k}$, hence the statement.
\end{proof}

\vspace*{0.2cm}


Denote $d_{\alpha}$ the dimension ${\bf G}/{\bf P}_{\hat \alpha}$, and let $k_{\alpha}$ be the index of ${\bf G}/{\bf P}_{\hat \alpha}$ which is defined from the equation $\omega _{{\bf G}/{\bf P}_{\hat \alpha}} = \Ll _{-k_{\alpha}\omega}$ (recall that $\Ll _{\omega}$ is the positive generator of the group ${\rm Pic}({\bf G}/{\bf P}_{\hat \alpha}) = \mathbb Z$).
Similarly to (\ref{eq:Serre_duality_for_-rho}), one obtains ${\rm H}^i({\bf G}/{\bf P}_{\hat \alpha},\Ll _{-p^n\omega}) = {\rm H}^{d_{\alpha}-i}({\bf G}/{\bf B},\Ll _{(p^n-k_{\alpha})\omega})^{\ast}$. Hence, ${\rm H}^i({\bf G}/{\bf P}_{\hat \alpha},\Ll _{-p^n\omega})=0$ for $i\neq d_{\alpha}$. 
When $\bf G$ has a small rank, decomposition (\ref{eq:partial_decomposition}) can be refined; that is, one can split another semiorthogonal piece off $\A _{-2}$. Specifically, consider a sequence of triangulated subcategories:

\begin{figure}[H]
\begin{equation}\label{eq:FrobExcCollonSl_3/B}
\xymatrix{
& \A _{-4} & \A _{-3}&  \A _{-2}& \A _{-2}& \A _0\\
& || & || & || & ||  & || \\
&*++<10pt>[F]\txt{$\Ll _{-\rho}$}
&*++<10pt>[F]\txt{$\Ll _{-\omega _1}$ \\ \\ \vdots \\ \\ $\Ll _{-\omega _n}$}
&*++<10pt>[F]\txt{${^{\perp}\langle \A _{-4}, \A _{-3}\rangle} \cap {\langle \A _{-1}, \A _{0}\rangle}^{\perp}$}
&*++<10pt>[F]\txt{$\Psi _1^{\omega _1}$  \\ \\ \vdots \\ \\ $\Psi _1^{\omega _n}$}
&*++<10pt>[F]\txt{$\Oo _{{\bf G}/{\bf B}}$}
}
\end{equation}
 \end{figure}


\begin{proposition}\label{prop:enhanced_simple_partial_decomp}
Let $\bf G$ be of rank two. Then the components $\A _{-4}, \A _{-3}, A _{-1},\A _{0}$ satisfy the conditions of Theorem \ref{th:Frobdecomposclaim}.
\end{proposition}




\subsection{Type ${\bf A}_2$}


Let $\bf G = {\bf SL}_3$, and $\omega _1,\omega _2$ the two fundamental weights. 
Recall that the canonical sheaf $\omega _{{\bf G}/{\bf B}}$ on ${\bf G}/{\bf B}$ is isomorphic to $\Ll _{-2\rho}$, where $\rho$ is the sum of fundamental weights. 

\begin{lemma}\label{lem:SL_3-lemma}
Consider a sequence $\A = \langle \A _i\rangle _{i=-3}^{i=0}$ of full triangulated subcategories of $\Dd ^b({\bf G}/{\bf B})$:

\vspace*{0.2cm}

\begin{figure}[H]
\begin{equation}\label{eq:FrobExcCollonSl_3/B}
\xymatrix{
& \A _{-3}& \A _{-2}& \A _{-1}& \A _0\\
& || & || & || & || \\
&*++<10pt>[F]\txt{$\Ll _{-\rho}$}
&*++<10pt>[F]\txt{$\Ll _{-\omega _1}$  \\ \\ $\Ll _{-\omega _2}$}
&*++<10pt>[F]\txt{$\Psi _1^{\omega _1}$  \\ \\ $\Psi _1^{\omega _2}  $}
 &*++<10pt>[F]\txt{$\Oo _{{\bf SL}_3/{\bf B}}$}
}
\end{equation}
 \end{figure}
 
 \vspace*{0.2cm}

Then $ \A$ is a semiorthogonal decomposition of $\Dd ^b({\bf SL}_3/{\bf B})$ satisfying the conditions of Theorem \ref{th:Frobdecomposclaim}. The right dual decomposition with respect to (\ref{eq:FrobExcCollonSl_3/B}) consists of the following subcategories:

\vspace*{0.2cm}

\begin{figure}[H]
\begin{equation}\label{eq:FrobleftdualExcCollonSl_3/B}
\xymatrix{
& \C _0& \C _1& \C _2 & \C _3 \\
& || & || & || & || \\
&*++<10pt>[F]\txt{$\Oo _{{\bf SL}_3/{\bf B}}$}
&*++<10pt>[F]\txt{$\Ll _{\omega _1}$  \\ \\ $\Ll _{\omega _2}$}
&*++<10pt>[F]\txt{$\Psi _1^{\omega _1}\otimes \Ll _{\rho}$  \\ \\ $\Psi _1^{\omega _2}\otimes \Ll _{\rho}$}
&*++<10pt>[F]\txt{$\Ll _{\rho}$}
}
\end{equation}
 \end{figure}

\vspace*{0.2cm}

The bundles in the subcategory $\C _i$ for $-3 \leq i\leq 0$ are shifted by $[i]$ .

\end{lemma}

\begin{proof}     
Taking into account Propositions \ref{prop:simple_partial_decomp} and \ref{prop:enhanced_simple_partial_decomp}, one sees that the statement of the lemma is equivalent to saying that in the given case the subcategory $\A _{-2}$ 
in sequence (\ref{eq:partial_decomposition}) from Proposition \ref{prop:simple_partial_decomp} is zero.
To show this, consider short exact sequences:

\vspace*{0.2cm}

\begin{equation}
0\rightarrow \Ll _{-\omega _2}\rightarrow \Psi _1^{\omega _1}\rightarrow  \Ll _{\omega _2-\omega _1}\rightarrow 0,
\end{equation}

\vspace*{0.2cm}

\begin{equation}\label{eq:2ndrelEulerseq}
0\rightarrow \Ll _{-\omega _1}\rightarrow \Psi _1^{\omega _2}\rightarrow  \Ll _{\omega _1-\omega _2}\rightarrow 0.
\end{equation}

\vspace*{0.2cm}



It follows, for example, from sequence (\ref{eq:2ndrelEulerseq}) that the bundle $\Ll _{\omega _1-\omega _2}$ belongs to $\A$. Given that $\Ll _{-\omega _2},\Ll _{-\rho}$ also belong to $\A$, it follows from Theorem \ref{th:Orvlovth} that $\A$ generates the whole 
$\Dd ^b({\bf SL}_3/{\bf B})$.

The terms of right dual decomposition can easily be found using sequences (\ref{eq:defining_sequence_for_Psi_1}) for $k=1,2$. Thus, one obtains the right dual 
to $\Psi _1^{\omega _1}$ being isomorphic to $\Ll _{\omega _1}[-1]$, while the right dual to $\Psi _1^{\omega _2}$ is found to be isomorphic to $\Ll _{\omega _2}[-1]$. On the other hand, to find the 
right duals to $\Ll _{-\omega _1}, \Ll _{-\omega _2}$, one can mutate these to the left through $\Ll _{-\rho}$, and then to the right 
through the whole collection. Proposition \ref{prop:right-left_mutations_inverse} ensures that these will amount to the same result. The effect of the last action is described by Theorem \ref{th:mutations_of_block_collections}, that is tensoring with $\Ll _{2\rho}$. Left mutations through $\Ll _{-\rho}$ can be found tensoring sequences (\ref{eq:defining_sequence_for_Psi_1}) with $\Ll _{-\rho}$ or $k=1,2$. One sees that the right duals are isomorphic to $\Psi _1^{\omega _1}\otimes \Ll _{\rho}[-2]$, and $\Psi _1^{\omega _2}\otimes \Ll _{\rho}[-2]$. Finally, the right dual to $\Ll _{-\rho}$ is isomorphic to $\Ll _{\rho}[-3]$, once again by Theorem \ref{th:mutations_of_block_collections}.
\end{proof}

\vspace*{0.2cm}

\begin{theorem}\label{th:FrobdecomSL_3/B}
The bundle ${\sf F _n}_{\ast}\Oo _{{\bf SL}_3/{\bf B}}$ decomposes into the direct sum of vector bundles with indecomposable summands being isomorphic to:

\vspace*{0.2cm}

\begin{equation}\label{eq:FrobrightdualcollSL_3_B}
\Oo _{{\bf SL}_3/{\bf B}}, \quad  \Ll_{-\omega _1},   \quad \Ll _{-\omega _2},   \quad (\Psi _1^{\omega _1})^{\ast}\otimes \Ll _{-\rho},  \quad (\Psi _1^{\omega _2})^{\ast}\otimes \Ll _{-\rho},  \quad \Ll _{-\rho}.
\end{equation}

\vspace*{0.2cm}

The multiplicity spaces at each indecomposable summand are isomorphic, respectively, to:

\vspace*{0.2cm}

\begin{equation}
{\sf k}, \quad  \nabla _{p^n\omega _2}/\nabla _{\omega _2}^{[n]}, \quad \nabla _{p^n\omega _1}/\nabla _{\omega _1}^{[n]},
\quad \Delta _{(p^n-3)\omega _1}, \quad \Delta _{(p^n-3)\omega _2}, \quad \Delta _{(p^n-2)\rho}.
\end{equation}

\vspace*{0.2cm}

The set of bundles in (\ref{eq:FrobrightdualcollSL_3_B}) in the decomposition of 
${\sf F _n}_{\ast}\Oo _{{\bf SL}_3/{\bf B}}$ forms a full exceptional collection in $\Dd ^b({\bf SL}_3/{\bf B})$.

\end{theorem}

\begin{proof}
The semiorthogonal decomposition $\A$ from Lemma \ref{lem:SL_3-lemma} satisfies the assumptions of Theorem \ref{th:Frobdecomposclaim}. Hence, one has:

\vspace*{0.2cm}

\begin{equation}
{\sf F _n}_{\ast}\Oo _{{\bf SL}_3/{\bf B}}=  \bigoplus {\mathbb H}^{p}(X,{\sf F}_n^{\ast}\Ee _p)\otimes \Ff _p^{\ast},
\end{equation}

\vspace*{0.2cm}

where $\Ee _p$ are the terms of $\A$. Recall that $\Ff _p^{\ast}$ are obtained by dualizing the terms of right dual decomposition that are precisely the set in (\ref{eq:FrobrightdualcollSL_3_B}).
\end{proof}

\begin{remark}
{\rm The decomposition of ${{\sf F}_n}_{\ast}\Oo _{{\bf SL}_3/{\bf B}}$ for $n=1$ was previously obtained in \cite{HKR} using results of 
\cite{AK89} about the structure of ${\bf G}_1{\bf T}$--socle series of the induced ${\bf G}_1{\bf B}$--module $\hat{\nabla}(0)={\rm Ind}_{\bf B}^{{\bf G}_1{\bf B}}\chi _{0}$. The method of {\it loc.cit.} is drastically different from ours.}
\end{remark}


\subsection{Type ${\bf B}_2$}


We assume here that $p$ is odd, thus working over ${\rm Spec}({\mathbb Z}[\frac{1}{2}])$.
Recall the necessary facts about the flag variety ${\bf Sp}_4/{\bf B}$.
The group ${\bf Sp}_4$ has two parabolic subgroups ${\bf P}_{\alpha}$ and ${\bf
  P}_{\beta}$ that correspond to the simple roots $\alpha$ and
$\beta$, the root $\beta$ being the long root. Specifically, it follows from the description of parabolic subgroups of classical groups (e.g., \cite[Proposition 12.13]{MalTest}) that the homogeneous spaces ${\bf G}/{{\bf P}_{\alpha}}$ and  ${\bf G}/{{\bf P}_{\beta}}$ are isomorphic to the 3-dimensional quadric ${\sf Q}_3$ and $\Pp ^3$, respectively. The Levi subgroups of ${\bf P}_{\alpha}$ and ${\bf
  P}_{\beta}$ have a component isomorphic to ${\bf SL}_2$; its tautological representation in each case gives rise to a homogeneous rank 2 vector bundle on ${\bf G}/{{\bf P}_{\alpha}}$ (resp., on ${\bf G}/{{\bf P}_{\beta}}$). Denote 
$\pi _{\alpha}$ and $\pi _{\beta}$ the two projections of ${\bf G}/{\bf B}$ onto $\Pp ^3$ and ${\sf Q}_3$, respectively. More concretely, the projection $\pi _{\alpha}$ is the projective bundle over $\Pp ^3$ associated to a rank two vector bundle $\sf N$ over $\Pp ^3=\Pp (\nabla _{\omega _{\alpha}})$, and the projection $\pi _{\beta}$ is the projective bundle associated to another rank two bundle $\Uu _2$ on ${\sf Q}_3\subset \Pp (\nabla _{\omega _{\beta}})$. There is a short exact sequence on $\Pp ^3$:

	
\begin{equation}\label{eq:null_corr_ses}
0\rightarrow \Ll _{-\omega _{\alpha}}\rightarrow \Psi _1^{\omega _{\alpha}}\rightarrow {\sf
  N}\rightarrow 0,
\end{equation}


while the bundle $\Uu _2$ fits into a short exact sequence on ${\sf Q}_3$:


\begin{equation}\label{eq:spinor_ses}
0\rightarrow \Uu _2\rightarrow {\nabla}_{\omega _{\alpha}}\otimes \Oo _{{\sf
    Q}_3}\rightarrow \Uu _2^{\ast}\rightarrow 0.
\end{equation}

 
The following short exact sequences on ${\bf Sp}_4/{\bf B}$ will also be useful:


\begin{equation}\label{eq:seqforNonSp_4/B}
0\rightarrow \Ll _{\omega _{\alpha}-\omega _{\beta}}\rightarrow \pi _{\alpha}^{\ast}{\sf N}\rightarrow \Ll _{\omega _{\beta}-\omega _{\alpha}}\rightarrow 0,
\end{equation}


and 


\begin{equation}\label{eq:U2_seq}
0\rightarrow \Ll _{-\omega _{\alpha}}\rightarrow \pi _{\beta}^{\ast}\Uu _2\rightarrow \Ll _{\omega _{\alpha}-\omega _{\beta}}\rightarrow 0.
\end{equation}


Consider the sequence $\A= \langle \A _i\rangle _{i=-4}^{i=0}$ of full triangulated subcategories of $\Dd ^b({\bf Sp}_4/{\bf B})$:


\begin{figure}[H]
\begin{equation}\label{eq:FrobExcCollonSp_4/B}
$$
\xymatrix{
\A _{-4} & \A _{-3}& \A _{-2}& \A _{-1}& \A _0\\
|| & || & || & || & || \\
*++<10pt>[F]\txt{$\Ll _{-\rho}$}
&*++<10pt>[F]\txt{$\Ll _{-\omega _{\alpha}}$  \\ \\ $\Ll _{-\omega _{\beta}}$}
&*++<10pt>[F]\txt{$\Psi _2^{\omega _{\alpha}}$  \\ \\ $\pi _{\beta}^{\ast}\Uu _2$}
&*++<10pt>[F]\txt{$\Psi _1^{\omega _{\alpha}}$  \\ \\ $\Psi _1^{\omega _{\beta}}  $}
 &*++<10pt>[F]\txt{$\Oo _{{\bf Sp}_4/{\bf B}}$}
}
$$
\end{equation}
 \end{figure}

\vspace*{0.2cm}

\begin{lemma}\label{lem:Sp_4-lemma}
The sequence $\A$ is a semiorthogonal decomposition of $\Dd ^b({\bf Sp}_4/{\bf B})$ satisfying the conditions of Theorem \ref{th:Frobdecomposclaim}. The right dual decomposition $\C$ with respect to (\ref{eq:FrobExcCollonSp_4/B}) consists of the following subcategories:

\vspace*{0.2cm}

\vspace*{0.2cm}

\begin{figure}[H]
$$
\xymatrix{
\C _{0}& \C _{1}& \C _{2}& \C _{3 }& \C _{4}\\
|| & || & || & || & || \\
*++<10pt>[F]\txt{$\Oo _{{\bf Sp}_4/{\bf B}}$}
&*++<10pt>[F]\txt{$\Ll _{\omega _{\alpha}}$ \\ \\ $\Ll _{\omega _{\beta}}$}
&*++<10pt>[F]\txt{$\pi _{\beta}^{\ast}\Uu _2\otimes \Ll _{\rho}$ \\ \\ $\Psi _2^{\omega _{\alpha}}\otimes \Ll _{\rho}$}
&*++<10pt>[F]\txt{$\Psi _1^{\omega _{\beta}}\otimes \Ll _{\rho}$ \\ \\ $\Psi _1^{\omega _{\alpha}}$}
 &*++<10pt>[F]\txt{$\Ll _{\rho}$}
}
$$
 \end{figure}

\vspace*{0.2cm}

The bundles in the subcategory $\C _i$ for $-4 \leq i\leq 0$ are shifted by $[i]$ .

\end{lemma}

\begin{proof}
The collections $\langle \Ll _{-\omega _{\alpha}},\Psi _2^{\omega _{\alpha}},\Psi _1^{\omega _{\alpha}},\Oo _{\Pp ^3}\rangle$ and $\langle \Ll _{-\omega _{\beta}},\Uu _2,\Psi _1^{\omega _{\beta}},\Oo _{{\sf Q}_3}\rangle$ are exceptional on $\Pp ^3$  (see \cite{Be}) and ${\sf Q}_3$ (see \cite{Kap}), respectively. This implies many orthogonalities in the collection of the lemma. That the bundles in each $\A _i$ are mutually orthogonal can be verified using short exact sequences:

\vspace*{0.2cm}

\begin{equation}\label{eq:EuleronP^3forB_2}
0\rightarrow \Ll _{-\omega _{\alpha}}\rightarrow {\nabla}_{\omega _{\alpha}}\otimes \Oo _{\Pp ^3}\rightarrow (\Psi _1^{\omega _{\alpha}})^{\ast}\rightarrow 0,
\end{equation}

\vspace*{0.2cm}

\begin{equation}\label{eq:EuleronQ_3forB_2}
0\rightarrow \Ll _{-\omega _{\beta}}\rightarrow (\nabla _{\omega _{\beta}})^{\ast}\otimes \Oo _{{\sf Q}_3} \rightarrow (\Psi _1^{\omega _{\beta}})^{\ast}\rightarrow 0,
\end{equation}	

\vspace*{0.2cm}

and (\ref{eq:U2_seq}). Using these sequences and Theorems \ref{th:Bott-Demazure_th} and \ref{th:Andcor}, one deduces as well the remaining set of orthogonalities.

The property ${\rm H}^j({\bf Sp}_4/{\bf B},{\sf F}_n^{\ast}\Ee)=0$ for $j\neq i$ for a bundle $\Ee$ belonging to $\A_{-i}$ can be obtained, for example, from decompositions ${{\sf F}_n}_{\ast}\Oo _{\Pp ^3}$ and ${{\sf F}_n}_{\ast}\Oo _{{\sf Q}_3}$ into direct sum of indecomposables:

\vspace*{0.2cm}

\begin{equation}
{{\sf F}_n}_{\ast}\Oo _{\Pp ^3} = \Oo _{\Pp ^3}\oplus \Ll _{-\omega _{\alpha}}^{\oplus p_1}\oplus \Ll _{-2\omega _{\alpha}}^{\oplus p_2}
\oplus \Ll _{-3\omega _{\alpha}}^{\oplus p_3}
\end{equation}

\vspace*{0.2cm}

\begin{equation}
{{\sf F}_n}_{\ast}\Oo _{{\sf Q}_3}=  \Oo _{{\sf Q}_3}\oplus \Ll _{-\omega _{\beta}}^{\oplus q_1}\oplus (\Uu _2\otimes \Ll _{\omega _{-\beta}})^{\oplus q_2}\oplus \Ll _{-2\omega _{\beta}}^{\oplus q_3}.
\end{equation}

\vspace*{0.2cm}

Finally, that $\A$ generates $\Dd ^b({\bf Sp}_4/{\bf B})$
easily follows from sequences (\ref{eq:seqforNonSp_4/B}), (\ref{eq:U2_seq}), and (\ref{eq:defining_sequence_for_Psi_1}) with $\omega _k=\omega _{\beta}$.
Using these, one sees that semiorthogonal sequence (\ref{eq:FrobExcCollonSp_4/B}) contains the subcategories $\pi _{\beta}^{\ast}\Dd ^b({\sf Q}_3)$ and $\pi _{\beta}^{\ast}\Dd ^b({\sf Q}_3)\otimes \Ll _{-\omega _{\alpha}}$, hence delivering a semiorthogonal decomposition by Theorem \ref{th:Orvlovth}.

\subsubsection{Computing the right dual collection}
From sequences dual to (\ref{eq:EuleronP^3forB_2}) and (\ref{eq:defining_sequence_for_Psi_1}) with $\omega _k=\omega _{\beta}$, one immediately sees that ${\bf R}_{\langle \Oo _{{\bf Sp}_4/{\bf B}}\rangle}(\Psi _1^{\omega _{\alpha}})=\Ll _{\omega _{\alpha}}[-1]$ and  ${\bf R}_{\langle \Oo _{{\bf Sp}_4/{\bf B}}\rangle}(\Psi _1^{\omega _{\beta}})=\Ll _{\omega _{\beta}}[-1]$. On the other hand, tensoring each of the above sequences with $\Ll _{-\rho}$, 
one has from (\ref{eq:EuleronP^3forB_2}) that ${\bf L}_{\langle \Ll _{-\rho}\rangle}(\Ll _{-\omega _{\beta}})=\Psi _1^{\omega _{\alpha}}\otimes \Ll _{-\rho}[1]$ and from (\ref{eq:defining_sequence_for_Psi_1}) with $\omega _k=\omega _{\beta}$ that ${\bf L}_{\langle \Ll _{-\rho}\rangle}(\Ll _{-\omega _{\alpha}})=\Psi _1^{\omega _{\beta}}\otimes \Ll _{-\rho}[1]$. Thus, by 
 Proposition \ref{prop:right-left_mutations_inverse}, Theorem \ref{th:mutations_of_block_collections}, and Lemma \ref{lem:mutations_canonical_class},
 one obtains:

\vspace*{0.2cm}

\begin{equation}
{\bf R}_{\langle {\A}\rangle}{\bf L}_{\A _{-4}}(\Ll _{-\omega _{\alpha}}) = 
{\bf R}_{\langle {\A}_{-2},{\A}_{-1},{\A}_0\rangle}(\Ll _{-\omega _{\alpha}}) = \Psi _1^{\omega _{\beta}}\otimes \Ll _{\rho}[-3],
\end{equation}

\vspace*{0.2cm}

and 

\vspace*{0.2cm}

\begin{equation}
{\bf R}_{\langle {\A}\rangle}{\bf L}_{\A _{-4}}(\Ll _{-\omega _{\beta}}) =
{\bf R}_{\langle {\A}_{-2},{\A}_{-1},{\A}_0\rangle}(\Ll _{-\omega _{\beta}}) = 
\Psi _1^{\omega _{\alpha}}\otimes \Ll _{\rho}[-3]
\end{equation}

\vspace*{0.2cm}

To compute left dual bundle to $\pi _{\beta}^{\ast}\Uu _2$, we mutate it to the left through the subcategories $\A _{-3}$ and $\A_{-4}$, and then mutate the result to the right through the whole collection. The effect of the last action is described by Theorem \ref{th:mutations_of_block_collections}, and is equal to the right mutation of $\pi _{\beta}^{\ast}\Uu _2$ through the subcategories ${\A}_{-2},{\A}_{-1},{\A}_{0}$, i.e.
${\bf R}_{\langle {\A}_{-2},{\A}_{-1},{\A}_{0}\rangle}\pi _{\beta}^{\ast}\Uu _2$.

From (\ref{eq:U2_seq}) one finds that $\Hom ^{\bullet}(\Ll _{-\omega _{\alpha}},\pi _{\beta}^{\ast}\Uu _2)={\sf k}$ and $\Hom ^{\bullet}(\Ll _{-\omega _{\beta}},\pi _{\beta}^{\ast}\Uu _2)=\nabla _{\omega _{\alpha}}$. From (\ref{eq:spinor_ses}) one then finds ${\bf L}_{\langle \Ll _{-\omega _{\beta}}\rangle}\pi _{\beta}^{\ast}\Uu _2=\pi _{\beta}^{\ast}\Uu _2\otimes \Ll _{-\omega _{\beta}}[1]$. Further, $\Hom ^{\bullet}(\Ll _{-\omega _{\alpha}},\pi _{\beta}^{\ast}\Uu _2\otimes \Ll _{-\omega _{\beta}})={\sf k}[-1]$, and a unique non--trivial extension corresponds to the non--split short exact sequence

\vspace*{0.2cm}

\begin{equation}\label{eq:U_2toOmega_1extonSp_4/B}
0\rightarrow \pi _{\beta}^{\ast}\Uu _2\otimes \Ll _{\omega _{\alpha}}\rightarrow \Psi _1^{\omega _{\alpha}}\otimes \Ll _{\omega _{\alpha}}\rightarrow \Ll _{\omega _{\beta}}\rightarrow 0,
\end{equation}

\vspace*{0.2cm}

tensored with $\Ll _{-\rho}$. Indeed, concatenating two short exact sequences (\ref{eq:U2_seq}) and (\ref{eq:seqforNonSp_4/B}),
one obtains an element of the group  $\Ext ^2(\Ll _{\omega _{\beta}-\omega_{\alpha}},\Ll _{-\omega _{\alpha}})={\rm H}^2(\Ll _{-\omega _{\beta}})=0$, and the above sequence is the one that trivializes that element. Specifically, one has a commutative diagram:

\vspace{0.2cm}

\begin{figure}[H]
$$
\xymatrix{
& 0\ar@{->}[d] & 0 \ar@{->}[d] & & &\\
0\ar@{->}[r] & \Oo _{{\bf Sp}_4/{\bf B}} \ar@{->}[r]^{\quad \simeq}\ar@{->}[d] &\Oo _{{\bf Sp}_4/{\bf B}} \ar@{->}[r]\ar@{->}[d] &  0\ar@{->}[d] \\
0\ar@{->}[r] & \pi _{\beta}^{\ast}\Uu _2\otimes \Ll _{\omega _{\alpha}} \ar@{->}[r]\ar@{->}[d] & \Psi _1^{\omega _{\alpha}}\otimes \Ll _{\omega _{\alpha}}\ar@{->}[r]\ar@{->}[d] & \Ll _{\omega _{\beta}}\ar@{->}[d]\ar@{->}[r] & 0\\
0\ar@{->}[r] & \Ll _{2\omega _{\alpha}-\omega _{\beta}}\ar@{->}[r]\ar@{->}[d] & \pi _{\alpha}^{\ast}{\sf N}\otimes \Ll _{\omega _{\alpha}}\ar@{->}[r]\ar@{->}[d] & \Ll _{\omega _{\beta}}\ar@{->}[r] & 0\\
& 0 & 0 & & & \\
}
$$
  \end{figure}
  
 \vspace{0.2cm}

Thus, there is a distinguished triangle

\vspace*{0.2cm}

\begin{equation}
\dots \rightarrow \Psi _1^{\omega _{\alpha}}\otimes \Ll _{-\omega _{\beta}}\rightarrow \Ll _{-\omega _{\alpha}}\rightarrow 
\pi _{\beta}^{\ast}\Uu _2\otimes \Ll _{-\omega _{\beta}}[1]\rightarrow \dots ,
\end{equation}

\vspace*{0.2cm}

and one obtains ${\bf L}_{\langle \Ll _{-\omega _{\alpha}}\rangle}(\pi _{\beta}^{\ast}\Uu _2\otimes \Ll _{-\omega _{\beta}}[1])= \Psi _1^{\omega _{\alpha}}\otimes \Ll _{-\omega _{\beta}}[1]$. Putting all together, one has ${\bf L}_{\langle \Ll _{-\omega _{\alpha}}\rangle}({\bf L}_{\langle \Ll _{-\omega _{\beta}}\rangle}\pi _{\beta}^{\ast}\Uu _2) =  {\bf L}_{\langle {\A _{-3}}\rangle}\pi _{\beta}^{\ast}\Uu _2 =  \Psi _1^{\omega _{\alpha}}\otimes \Ll _{-\omega _{\beta}}[1]$. Finally, $\Hom ^{\bullet}(\Ll _{-\rho}, \Psi _1^{\omega _{\alpha}}\otimes \Ll _{-\omega _{\beta}})=
\Lambda ^2\nabla _{\omega _{\alpha}}^{\ast}$, and one finds:

\vspace*{0.2cm}

\begin{equation}
\dots \rightarrow {\bf L}_{\A _{-4}}( \Psi _1^{\omega _{\alpha}}\otimes \Ll _{-\omega _{\beta}})\rightarrow 
\Lambda ^2\nabla _{\omega _{\alpha}}^{\ast}\otimes \Ll _{-\rho}[1]\rightarrow  \Psi _1^{\omega _{\alpha}}\otimes \Ll _{-\omega _{\beta}}[1]\rightarrow \dots  ,
\end{equation}

\vspace*{0.2cm}

thus, ${\bf L}_{\langle \A _{-4}\rangle}( \Psi _1^{\omega _{\alpha}}\otimes \Ll _{-\omega _{\beta}}[1])= {\bf L}_{\langle \A _{-4}\rangle}{\bf L}_{\langle \A _{-3}\rangle}\pi _{\beta}^{\ast}\Uu _2 = {\bf L}_{\langle \A _{-4},\A _{-3}\rangle}\pi _{\beta}^{\ast}\Uu _2 = 
\Psi _2^{\omega _{\alpha}}\otimes \Ll _{-\rho}[2]$. Tensoring it with $\Ll _{2\rho}$ and applying Theorem \ref{th:mutations_of_block_collections}, one obtains:

\vspace*{0.2cm}

\begin{equation}
{\bf R}_{\langle {\A}_{-2},{\A}_{-1},{\A}_{0}\rangle}\pi _{\beta}^{\ast}\Uu _2 = \Psi _2^{\omega _{\alpha}}\otimes \Ll _{\rho}[-2].
\end{equation}

\vspace*{0.2cm}

Let us compute the right dual bundle to $\Psi _2^{\omega _{\alpha}}$. Similarly to the previous computation, we first compute the mutation of $\Psi _2^{\omega _{\alpha}}$ to the left through ${\A}_{-3}$ and ${\A}_{-4}$, and then mutate the result to the right through the whole collection. One computes $\Hom ^{\bullet}(\Ll _{-\omega _{\alpha}},\Psi _2^{\omega _{\alpha}})=\nabla _{\omega _{\alpha}}$, and $\Hom (\Ll _{-\omega _{\beta}},\Psi _2^{\omega _{\alpha}})={\sf k}$. Thus, one obtains ${\bf L}_{\langle \Ll _{-\omega _{\alpha}}\rangle}(\Psi _2^{\omega _{\alpha}})=\Ll _{-2\omega _{\alpha}}[1]$. Further, $\Hom ^{\bullet}(\Ll _{-\omega _{\beta}},\Ll _{-2\omega _{\alpha}})=
{\sf k}[-1]$, and a unique non--trivial extension corresponds to the non--split short exact sequence

\vspace*{0.2cm}

\begin{equation}
0\rightarrow \Ll _{\omega _{\beta}-\omega_{\alpha}}\rightarrow \pi _{\beta}^{\ast}\Uu _2^{\ast}\rightarrow \Ll _{\omega _{\alpha}}\rightarrow 0,
\end{equation}

\vspace*{0.2cm}

tensored with $\Ll _{-\rho}$. Thus, there is a distinguished triangle

\vspace*{0.2cm}

\begin{equation}
\dots \rightarrow \pi _{\beta}^{\ast}\Uu _2^{\ast}\otimes \Ll _{-\rho}\rightarrow \Ll _{-\omega _{\beta}} \rightarrow \Ll _{-2\omega_{\alpha}}[1]\rightarrow \dots ,
\end{equation}

\vspace*{0.2cm}

and ${\bf L}_{\langle \Ll _{-\omega _{\beta}}\rangle}\Ll _{-2\omega _{\alpha}}={\bf L}_{\langle \A_{-3}\rangle}(\Psi _2^{\omega _{\alpha}})= \pi _{\beta}^{\ast}\Uu _2^{\ast}\otimes \Ll _{-\rho}[1]$. Finally, $\Hom ^{\bullet}(\Ll _{-\rho},\pi _{\beta}^{\ast}\Uu _2^{\ast}\otimes \Ll _{-\rho})=\nabla _{\omega _{\alpha}}$, and there is a short exact sequence obtained from (\ref{eq:spinor_ses}) tensoring with $\Ll _{-\rho}$:

\vspace*{0.2cm}

\begin{equation}
0\rightarrow \pi _{\beta}^{\ast}\Uu _2\otimes \Ll _{-\rho}\rightarrow \nabla _{\omega _{\alpha}}\otimes \Ll _{-\rho}\rightarrow \pi _{\beta}^{\ast}\Uu _2^{\ast}\otimes \Ll _{-\rho}\rightarrow 0.
\end{equation}

\vspace*{0.2cm}

Thus, ${\bf L}_{\langle \Ll _{-\rho}\rangle}(\pi _{\beta}^{\ast}\Uu _2^{\ast}\otimes \Ll _{-\rho}[1])={\bf L}_{\langle \Ll _{-\rho}\rangle}{\bf L}_{\langle \A _{-3}\rangle}(\pi _{\beta}^{\ast}\Uu _2^{\ast}\otimes \Ll _{-\rho}[1]) =  \pi _{\beta}^{\ast}\Uu _2\otimes \Ll _{-\rho}[2]$. Theorem \ref{th:mutations_of_block_collections} then gives 

\vspace*{0.2cm}

\begin{equation}
{\bf R}_{\langle {\A}_{-2},{\A}_{-1},{\A}_{0}\rangle}(\Psi _2^{\omega _{\alpha}}) = \pi _{\beta}^{\ast}\Uu _2\otimes \Ll _{\rho}[-2].
\end{equation}
\end{proof}

\vspace*{0.2cm}

\begin{theorem}\label{th:FrobdecomSp_4/B}
The bundle ${\sf F _n}_{\ast}\Oo _{{\bf Sp}_4/{\bf B}}$ decomposes into the direct sum of vector bundles with indecomposable summands being isomorphic to:

\vspace*{0.2cm}

\begin{eqnarray}\label{eq:FrobrightdualcollSp_4_B}
&  \Oo _{{\bf Sp}_4/{\bf B}}, \quad  \Ll_{-\omega _{\alpha}}, \quad \Ll _{-\omega _{\beta}}, \quad 
\pi _{\beta}^{\ast}\Uu _2^{\ast}\otimes \Ll _{-\rho}, \quad (\Psi _2^{\omega _{\alpha}})^{\ast}\otimes \Ll _{-\rho}\\
&  (\Psi _1^{\omega _{\alpha}})^{\ast}\otimes \Ll _{-\rho}, \quad  (\Psi _1^{\omega _{\beta}})^{\ast}\otimes \Ll _{-\rho}, \quad \Ll _{-\rho}. \nonumber
\end{eqnarray}

\vspace*{0.2cm}

The multiplicity spaces at each indecomposable summand are isomorphic, respectively, to:

\vspace*{0.2cm}

\begin{eqnarray}
& {\sf k}, \quad \nabla _{p^n\omega _{\beta}}/\nabla _{\omega _{\beta}}^{[n]},
\quad \nabla _{p^n\omega _{\alpha}}/\nabla _{\omega _{\alpha}}^{[n]}, \quad 
\Delta _{(p^n-3)\omega _{\alpha}}/\nabla _{(p^n-4)\omega _{\alpha}+\omega _{\beta}}, \quad {\rm H}^2({\bf Sp}_4/{\bf B},{\sf F}_n^{\ast}\Psi _2^{\omega _{\alpha}}), \nonumber \\
&  \quad  \Delta _{(p^n-3)\omega _{\beta}}, \quad \Delta _{(p^n-3)\omega _{\alpha}}, \quad \Delta _{(p^n-2)\rho}.  
\end{eqnarray}

\vspace*{0.2cm}

\end{theorem}

\begin{proof}
Similar to the proof of Theorem \ref{th:FrobdecomSL_3/B}.
\end{proof}

As a corollary to Theorem \ref{th:FrobdecomSp_4/B}, one obtains another proof of the main result of \cite{SamD-affflag}:

\begin{corollary}
One has ${\rm H}^i({\bf Sp}_4/{\bf B},{\mathcal D}_{{\bf Sp}_4/{\bf B}})=0$ for $i>0$, and the flag variety ${\bf Sp}_4/{\bf B}$ is $\sf D$--affine.
\end{corollary}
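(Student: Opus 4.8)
The plan is to deduce $\D$-affinity of ${\bf Sp}_4/{\bf B}$ from the explicit decomposition of ${\sf F}^n_{\ast}\Oo_{{\bf Sp}_4/{\bf B}}$ in Theorem \ref{th:FrobdecomSp_4/B}. Recall that ${\mathcal D}_{{\bf Sp}_4/{\bf B}} = \bigcup_n {\mathcal End}({\sf F}^n_{\ast}\Oo_{{\bf Sp}_4/{\bf B}})$, so it suffices to show that ${\rm H}^i({\bf Sp}_4/{\bf B}, {\mathcal End}({\sf F}^n_{\ast}\Oo_{{\bf Sp}_4/{\bf B}})) = 0$ for all $i>0$ and all $n$, since cohomology commutes with the filtered colimit. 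Writing $\mathcal F = {\sf F}^n_{\ast}\Oo_{{\bf Sp}_4/{\bf B}} = \bigoplus_j M_j \otimes {\sf G}_j$ with ${\sf G}_j$ ranging over the eight indecomposable bundles listed in (\ref{eq:FrobrightdualcollSp_4_B}) and $M_j$ the corresponding multiplicity spaces, one has ${\mathcal End}(\mathcal F) = \bigoplus_{j,l} \Hom_k(M_j, M_l) \otimes {\sf G}_j^{\ast} \otimes {\sf G}_l$, so the vanishing reduces to showing ${\rm H}^i({\bf Sp}_4/{\bf B}, {\sf G}_j^{\ast}\otimes {\sf G}_l) = 0$ for $i>0$ and every pair $(j,l)$.

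The key observation making this manageable is that the ${\sf G}_j$ are, up to twist, the duals of the terms of the right dual semiorthogonal decomposition (\ref{eq:FrobleftdualExcCollonSp_4/B}), hence $({\sf G}_j)$ itself is an exceptional collection (with mutually orthogonal bundles inside each block $\A_i^{\vee}$), refining a full exceptional collection when $p$ is large, by the discussion following Claim \ref{cl:Frobdecomposclaim}. First I would order the ${\sf G}_j$ so that ${\rm Ext}^{\bullet}({\sf G}_j, {\sf G}_l) = {\rm H}^{\bullet}({\bf Sp}_4/{\bf B}, {\sf G}_j^{\ast}\otimes {\sf G}_l)$ is concentrated in degree $0$ whenever $j \le l$ — this is automatic from the collection being "$\Ext$-exceptional", i.e. consisting of sheaves with no higher self- or cross-$\Ext$ in the forward direction, which follows from Proposition 9.2 of \cite{Bo} as invoked in Claim \ref{cl:Frobdecomposclaim}. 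For the backward pairs $j > l$ one uses the short exact sequences (\ref{eq:seqforNonSp_4/B}), (\ref{eq:U2_seq}), (\ref{eq:Psi_1onQ_3seq}), the Euler sequences on $\Pp^3$ and ${\sf Q}_3$ pulled back along $\pi$ and $q$, together with the Borel–Weil–Bott-type statements of Theorems \ref{th:Andth}–\ref{th:AndthII} (equivalently Kempf vanishing and Serre duality), to compute ${\rm H}^{\bullet}$ of each twist ${\sf G}_j^{\ast}\otimes {\sf G}_l$ and check it too is concentrated in a single degree; since the bundles are direct summands of the locally free sheaf $\mathcal F$ the relevant weights are tightly constrained and the non-standard vanishing phenomena of \cite{AnKan} do not intervene in rank $2$ type ${\bf B}$.

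I expect the main obstacle to be the bookkeeping for the "mixed" summands $q^{\ast}\Uu_2^{\ast}\otimes\Ll_{-\rho}$, $\pi^{\ast}\Omega^1_{\Pp^3}\otimes\Ll_{\omega_{\alpha}-\omega_{\beta}}$, $q^{\ast}\Psi_1^{\ast}\otimes\Ll_{-\rho}$, and $\pi^{\ast}\T_{\Pp^3}\otimes\Ll_{-\omega_{\alpha}-\rho}$: the tensor products ${\sf G}_j^{\ast}\otimes{\sf G}_l$ among these are not line bundles, so one must resolve them by the Euler and null-correlation/spinor sequences above and run the resulting long exact cohomology sequences, at each stage invoking the line-bundle cohomology results. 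Once every pair is handled, $\bigoplus_{j,l}\Hom_k(M_j,M_l)\otimes {\rm H}^i({\sf G}_j^{\ast}\otimes{\sf G}_l) = 0$ for $i>0$ gives ${\rm H}^i({\bf Sp}_4/{\bf B},{\mathcal End}({\sf F}^n_{\ast}\Oo)) = 0$; passing to the colimit over $n$ yields ${\rm H}^i({\bf Sp}_4/{\bf B},{\mathcal D}_{{\bf Sp}_4/{\bf B}}) = 0$ for $i>0$, and since $\Oo_{{\bf Sp}_4/{\bf B}}$ is a quotient of ${\mathcal D}_{{\bf Sp}_4/{\bf B}}$ with kernel admitting a similar filtration, the standard argument (e.g. as in \cite{SamD-affflag}) then shows every ${\mathcal D}_{{\bf Sp}_4/{\bf B}}$-module is generated by global sections with vanishing higher cohomology, i.e. ${\bf Sp}_4/{\bf B}$ is $\sf D$-affine.
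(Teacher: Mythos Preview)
Your overall plan coincides with the paper's: use the decomposition of ${\sf F}^n_{\ast}\Oo$ from Theorem~\ref{th:FrobdecomSp_4/B}, show that the indecomposable summands have no higher $\Ext$ among themselves, identify this with ${\rm H}^{>0}({\bf Sp}_4/{\bf B},\D^{(n)}_{{\bf Sp}_4/{\bf B}})=0$, pass to the union over $n$, and then invoke \cite{SamD-affflag} for ${\sf D}$-affinity. The paper compresses all of this into two sentences, the operative one being that the indecomposable summands ``form an exceptional collection by the very construction.''

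There is, however, a reversal in your write-up. In an exceptional collection $(G_0,\dots,G_m)$ one has $\Ext^{\bullet}(G_j,G_l)=0$ in \emph{all} degrees whenever $j>l$; that is the content of Definition~\ref{def:exceptcollection}, and no computation with the short exact sequences is required for these ``backward'' pairs. What exceptionality does \emph{not} give for free is vanishing of $\Ext^{>0}$ in the forward direction $j<l$ --- this is the strongness condition, and it is exactly what is needed to conclude $\Ext^{>0}({\sf F}^n_{\ast}\Oo,{\sf F}^n_{\ast}\Oo)=0$. Your appeal to Proposition~9.2 of \cite{Bo} for this step is misplaced: as used in Claim~\ref{cl:Frobdecomposclaim} that proposition only guarantees that the right dual objects are pure sheaves, not that their forward $\Ext$'s vanish. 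So the case analysis you sketch with sequences (\ref{eq:seqforNonSp_4/B})--(\ref{eq:Psi_1onQ_3seq}), the Euler sequences, and Theorems~\ref{th:Andth}--\ref{th:AndthII} is indeed the right tool, but it should be aimed at the forward pairs $j<l$, not the backward ones. With that correction your argument matches the paper's (and in fact fills in the detail that the paper's ``by the very construction'' leaves implicit).
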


\begin{proof}
The decomposition from Theorem \ref{th:FrobdecomSp_4/B} implies $\Ext ^i({\sf F _n}_{\ast}\Oo _{{\bf Sp}_4/{\bf B}},{\sf F _n}_{\ast}\Oo _{{\bf Sp}_4/{\bf B}})=0$ for $i>0$, since the indecomposable summands in the decomposition form an exceptional collection
by the very construction. Thus, $\Ext ^i({\sf F _n}_{\ast}\Oo _{{\bf Sp}_4/{\bf B}},{\sf F _n}_{\ast}\Oo _{{\bf Sp}_4/{\bf B}})=
{\rm H}^i({\bf Sp}_4/{\bf B},{\mathcal D}_{{\bf Sp}_4/{\bf B}}^{(n)})=0$ for $i>0$, and we conclude as in \cite{SamD-affflag}.
\end{proof}

\begin{remark}
{\rm Decomposition of ${\sf F _n}_{\ast}\Oo _{{\bf Sp}_4/{\bf B}}$ for $n=1$ was previously obtained in \cite{KanYe} (see also \cite{KanYe-loc}) using results of 
\cite{AK89} about the structure of ${\bf G}_1{\bf T}$--socle series of the induced ${\bf G}_1{\bf B}$--module $\hat{\nabla}(0)={\rm Ind}_{\bf B}^{{\bf G}_1{\bf B}}{\sf k}$.
Again, their methods are different from the one presented here.}
\end{remark}

\vspace{0.5cm}


\subsection{Type ${\bf G}_2$}


Recall the necessary facts about the flag variety ${\bf G}_2/{\bf B}$. We assume here that $p\neq 2,3$, thus working over ${\rm Spec}({\mathbb Z}[\frac{1}{6}])$.
Recall the necessary facts about the flag variety ${\bf G}_2/{\bf B}$.
The group ${\bf G}_2$ has two parabolic subgroups ${\bf P}_{\alpha}$ and ${\bf
  P}_{\beta}$ that correspond to the simple roots $\alpha$ and
$\beta$, the root $\beta$ being the long root. The homogeneous spaces ${\bf G}/{{\bf P}_{\alpha}}$ and  ${\bf G}/{{\bf P}_{\beta}}$ are isomorphic to the 5--dimensional variety ${\bf G}_2^{\rm ad}\subset \Pp (\nabla _{\omega _{\beta}})$ (the orbit of the highest vector in the adjoint representation of ${\bf G}_2$) and to the 5--dimensional quadric ${\sf Q}_5$, respectively. The Levi subgroups of ${\bf P}_{\alpha}$ and ${\bf  P}_{\beta}$ have a component isomorphic to ${\bf SL}_2$; its tautological representation in each case gives rise to a homogeneous rank 2 vector bundle on ${\bf G}/{{\bf P}_{\alpha}}$ (resp., on ${\bf G}/{{\bf P}_{\beta}}$). Denote $\pi _{\beta}$ and $\pi _{\alpha}$ the two projections of ${\bf G}_2/{\bf B}$ onto ${\bf G}_2^{\rm ad}\subset \Pp (\nabla _{\omega _{\beta}})$ and ${\sf Q}_5\subset \Pp (\nabla _{\omega _{\alpha}})$, respectively. More concretely, the projection $\pi _{\beta}$ is the projective bundle over ${\bf G}_2^{\rm ad}$ associated to a rank two vector bundle $\Uu _2$ over ${\bf G}_2^{\rm ad}\subset {\rm Gr}_{2,\Pp (\nabla _{\omega _{\alpha}})}$, and the projection $\pi _{\alpha}$ is the projective bundle associated to another rank two bundle $\mathcal K$ on ${\sf Q}_5$ with ${\rm det} \ \Kk = \Ll _{-3\omega_{\alpha}}$. 
There are short exact sequences: 

\vspace*{0.2cm}

\begin{equation}\label{eq:spinorsequence}
0\rightarrow {\Ss}\rightarrow {\sf U}\otimes \Oo _{{\sf
    Q}_n}\rightarrow {\Ss}^{\ast}\rightarrow 0,
\end{equation}

\vspace*{0.2cm}

where $\Ss$ the spinor bundle on ${\sf Q}_5$ and $\sf U$ is the spinor representation regarded as a representation of ${\bf G}_2$ via the restriction ${\bf G}_2\subset {\bf Spin}_7$, and 

\vspace*{0.2cm}

\begin{equation}\label{eq:relEulerseqforK}
0\rightarrow \Ll _{-\omega_{\beta}}\rightarrow  \pi _{\alpha}^{\ast}\Kk \rightarrow \Ll _{\omega _{\beta} - 3\omega _{\alpha}} \rightarrow 0.
\end{equation}

\vspace*{0.2cm}

One has ${\Ss}\otimes \Ll _{\omega _{\alpha}} = {\Ss}^{\ast}$.  The bundles $\Ss$ and $\Kk$ are related via a short exact sequence (Appendix B, \cite{Ku}):

\vspace*{0.2cm}

\begin{equation}\label{eq:seqforK}
0\rightarrow \Kk \rightarrow (\Psi _1^{\omega _{\alpha}})^{\ast}\otimes \Ll _{-\omega _{\alpha}}\rightarrow \Ss\rightarrow 0.
\end{equation}

\vspace*{0.2cm}

There is the tautological sequence for the bundle $\Uu _2$ on ${\bf G}_2^{\rm ad}$ obtained by the restriction of the tautological short exact sequence on ${\rm Gr}_{2,\Pp (\nabla _{\omega _{\alpha}})}$:

\vspace*{0.2cm}

\begin{equation}\label{eq:tautological_seq_U}
0\rightarrow \Uu _2\rightarrow \nabla _{\omega _{\alpha}}\otimes \Oo _{{\bf G}_2^{\rm ad}}\rightarrow 
\nabla _{\omega _{\alpha}}/\Uu _2\rightarrow 0.
\end{equation}

\vspace*{0.2cm}

Denote $\Uu _2^{\perp}: = (\nabla _{\omega _{\alpha}}/\Uu _2)^{\ast}$. Finally, there is a short exact sequence for $\Uu _2$:

\vspace*{0.2cm}

\begin{equation}\label{eq:seqforU}
0\rightarrow \Ll _{-\omega _{\alpha}}\rightarrow \pi _{\beta}^{\ast}\Uu _2\rightarrow \Ll _{\omega _{\alpha}-\omega _{\beta}}\rightarrow 0.
\end{equation}

\vspace*{0.2cm}

Consider a sequence $\A= \langle \A _i\rangle _{i=-6}^{i=0}$ of full triangulated subcategories of $\Dd ^b({\bf G}_2/{\bf B})$:

\begin{figure}[H]
\begin{equation}\label{eq:FrobexccollonG_2/B}
$$
\xymatrix{
 \A _{-6} & \A _{-5}& \A _{-4} & \A _{-3}& \A _{-2}& \A _{-1}& \A _0\\
 || & || & || & || & || & || & || \\
*++<10pt>[F]\txt{$\Ll _{-\rho}$}
&*++<10pt>[F]\txt{$\Ll _{-\omega _{\alpha}}$  \\ \\ $\Ll _{-\omega _{\beta}}$}
&*++<10pt>[F]\txt{$(\Psi _1^{\omega _{\alpha}})^{\ast}\otimes \Ll _{-\omega _{\alpha}}$  \\ \\ $\pi _{\beta}^{\ast}\Uu _2$}
&*++<10pt>[F]\txt{$\pi _{\alpha}^{\ast}\Ss$  \\ \\ $\pi _{\beta}^{\ast}\Ee$}
&*++<10pt>[F]\txt{$\Phi _2^{\omega _{\alpha}}$  \\ \\ $\pi _{\beta}^{\ast}\Uu _2^{\perp}$}
&*++<10pt>[F]\txt{$\Psi _1^{\omega _{\alpha}}$  \\ \\ $\Psi _1^{\omega _{\beta}}  $}
 &*++<10pt>[F]\txt{$\Oo _{{\bf G}_2/{\bf B}}$}
}
$$
\end{equation}
 \end{figure}

\vspace*{0.2cm}

Here $\Phi _2^{\omega _{\alpha}}={\bf L}_{\langle \Oo _{{\bf G}_2/{\bf B}},\Ll _{\omega _{\alpha}}\rangle}(\Ll _{2\omega _{\alpha}})$,\footnote{Recall that in \cite{Kap} $\Phi _2^{\omega _{\alpha}}$ is denoted $\Psi _2$.} and $\Ee={\pi _{\beta}}_{\ast}\Phi _2^{\omega _{\alpha}}$. To calculate $\Ee$ more explicitly, consdier the exact sequence defining $\Phi _2^{\omega _{\alpha}}={\bf L}_{\langle \Oo _{{\bf G}_2/{\bf B}},\Ll _{\omega _{\alpha}}\rangle}(\Ll _{2\omega _{\alpha}})$

\vspace*{0.2cm}

\begin{equation}\label{eq:defining_sequence_for_E}
0\rightarrow \Phi _2^{\omega _{\alpha}}\rightarrow \Oo _{{\bf G}_2/{\bf B}}\otimes (\Lambda ^2 \nabla _{\omega _{\alpha}}\oplus {\sf k})\rightarrow 
\nabla _{\omega _{\alpha}}\otimes \Ll _{\omega _{\alpha}}\rightarrow 
\Ll _{2\omega _{\alpha}}\rightarrow 0,
\end{equation}

\vspace*{0.2cm}

and apply ${\pi _{\beta}}_{\ast}$ to it:

\vspace*{0.2cm}

\begin{equation}\label{eq:Phi_2_sequence}
0\rightarrow {\pi _{\beta}}_{\ast}\Phi _2^{\omega _{\alpha}}\rightarrow \Oo _{{\bf G}_2^{\rm ad}}\otimes (\Lambda ^2 \nabla _{\omega _{\alpha}}\oplus {\sf k})\rightarrow \nabla _{\omega _{\alpha}}\otimes \Uu _2^{\ast}\rightarrow 
{\sf S}^2\Uu _2^{\ast}\rightarrow 0.
\end{equation}

\vspace*{0.2cm}

Taking into account the dual of the second exterior power of tautological short exact sequence (\ref{eq:tautological_seq_U})

\vspace*{0.2cm}

\begin{equation}\label{eq:wedge_2_tautU}
0\rightarrow \Lambda ^2\Uu _2^{\perp}\rightarrow \Oo _{{\bf G}_2^{\rm ad}}\otimes \Lambda ^2 \nabla _{\omega _{\alpha}}\rightarrow \nabla _{\omega _{\alpha}}\otimes \Uu _2^{\ast}\rightarrow 
{\sf S}^2\Uu _2^{\ast}\rightarrow 0,
\end{equation}

\vspace*{0.2cm}

we obtain $\Ee$ as a unique non--trivial extension on ${\bf G}_2^{\rm ad}$

\vspace*{0.2cm}

\begin{equation}\label{eq:defining_sequence_for_E}
0\rightarrow \Lambda ^2\Uu _2^{\perp}\rightarrow \Ee \rightarrow \Oo _{{\bf G}_2^{\rm ad}}\rightarrow 0.
\end{equation}

\vspace*{0.2cm}

\begin{theorem}\label{lem:G_2-lemma}
The sequence $\A$ is a semiorthogonal decomposition of $\Dd ^b({\bf G}_2/{\bf B})$. The right dual decomposition $\C$ with respect to (\ref{eq:FrobexccollonG_2/B}) consists of the following subcategories:

\vspace*{0.2cm}

\begin{figure}[H]
\begin{equation}\label{eq:FrobrightdualexccollonG_2/B}
$$
\xymatrix{
{\C}_{0} & {\C}_1& {\C}_2 & {\C}_3 & {\C}_4& {\C}_5 &{\C}_6\\
|| & || & || & || & || & || & || \\
*++<10pt>[F]\txt{$\Oo _{{\bf G}_2/{\bf B}}$}
&*++<10pt>[F]\txt{$\Ll _{\omega _{\alpha}}$  \\ \\ $\Ll _{\omega _{\beta}}$}
&*++<10pt>[F]\txt{$\pi _{\beta}^{\ast}\Uu _2^{\ast}\otimes \Ll _{\omega _{\alpha}}$  \\ \\ $(\Psi _1^{\omega _{\alpha}})^{\ast}\otimes \Ll _{\omega _{\beta}}$}
&*++<10pt>[F]\txt{$\pi _{\beta}^{\ast}\Ee \otimes \Ll _{\rho}$ \\ \\ $\pi _{\alpha}^{\ast}\Ss \otimes \Ll _{\rho}$}
&*++<10pt>[F]\txt{$\pi _{\beta}^{\ast}\Uu _2^{\perp}\otimes \Ll _{\rho}$ \\ \\ $\Phi _2^{\omega _{\alpha}}\otimes \Ll _{\rho}$}
&*++<10pt>[F]\txt{$\Psi _1^{\omega _{\beta}}\otimes \Ll _{\rho}$  \\ \\ $\Psi _1^{\omega _{\alpha}}\otimes \Ll _{\rho}$}
 &*++<10pt>[F]\txt{$\Ll _{\rho}$}
}
$$
\end{equation}
\end{figure}

\vspace*{0.2cm}

The bundles in the subcategory $\C _i$ for $-6 \leq i\leq 0$ are shifted by $[i]$ .

\end{theorem}

\begin{proof}
The necessary orthogonalities in (\ref{eq:FrobexccollonG_2/B}) and the fact that $\A$ generates $\Dd ^b({\bf G}_2/{\bf B})$ are verified directly using the vanishing theorems of Section \ref{sec:Cohomology_on_G/B} and Theorem \ref{th:Orvlovth}. Tensoring $\mathcal A$ with $\Ll _{\rho}$ and flipping the two bundles in each block of $\mathcal A$, one obtains the decomposition $\C$; using Proposition \ref{prop:dual_exc_coll_characterization} that characterizes the right dual decomposition, one ensures that $\C$ is the right dual to $\A$.

\end{proof}

\vspace{0.5cm}

\end{document}